\newcommand{\old}[1]{}
\renewcommand{\emph}[1]{\textit{#1}}
\definecolor{brown}{cmyk}{0, 0.72, 1, 0.45}
\definecolor{grey}{gray}{0.5}
\newcounter{rot}
\newcommand{\ignore}[1]{}
\newcommand{\set}[1]{\left\{#1\right\}}
\def\ii_(#1,#2){i_{#1}^{#2}}
\def\cM{\mathcal{M}}
\def\a{\alpha}
\def\d{\delta}
\def\e{\varepsilon}
\def\p{\pi}
\def\r{\rho}
\def\s{\sigma}
\def\om{\omega}
\def\cT{{\mathcal T}}
\def\Re{\mathbb{R}}
\newcommand{\brac}[1]{\left( #1 \right)}
\newcommand{\expect}{\operatorname{\bf E}}
\def\E{\expect}
\renewcommand{\Pr}{\operatorname{\bf Pr}}
\newcommand\bfrac[2]{\left(\frac{#1}{#2}\right)}
\newtheorem{theorem}{Theorem}[section]
\newtheorem{observation}[theorem]{Observation}
\newcounter{thmtemp}
{
\theoremstyle{definition}
\newtheorem{definition}[theorem]{Definition}

\newtheorem{remark}[theorem]{Remark}

}
\newcommand{\nospace}[1]{}
\def\path{\operatorname{PATH}}
\renewcommand{\Re}{\mathbb{R}}
\newcommand{\arabicifpos}[1]{\@arabicifpos{\@nameuse{c@#1}}}
\newcommand{\@arabicifpos}[1]{\ifnum #1>0 \number #1\fi}
\newcommand{\ep}{\varepsilon}
\title{Separating effect from significance in Markov chain tests}
\author{Maria Chikina}
\address{Department of Computational and Systems Biology\\
University of Pittsburgh\\
3078 Biomedical Science Tower 3\\
Pittsburgh, PA 15213\\
U.S.A.}
\email[email: ]{mchikina@pitt.edu}
\author{Alan Frieze}
\email[email: ]{alan@random.math.cmu.edu}
\thanks{Research supported in part by NSF grant DMS-1362785.}
\author{Jonathan Mattingly}
\email[email: ]{jonm@math.duke.edu}
\author{Wesley Pegden}
\email[email: ]{wes@math.cmu.edu}
\thanks{Research supported in part by NSF grant DMS-1363136 and the Sloan foundation.}
\address{Department of Mathematical Sciences\\
Carnegie Mellon University\\
Pittsburgh, PA 15213\\
U.S.A.}
\date{\today}
\begin{document}
\begin{abstract}
We give qualitative and quantitative improvements to theorems which enable significance testing in Markov Chains, with a particular eye toward the goal of enabling strong, interpretable, and statistically rigorous claims of political gerrymandering.  Our results can be used to demonstrate at a desired significance level that a given Markov Chain state (e.g., a districting) is extremely unusual (rather than just atypical) with respect to the fragility of its characteristics in the chain.  We also provide theorems specialized to leverage quantitative improvements when there is a product structure in the underlying probability space, as can occur due to geographical constraints on districtings.
\end{abstract}

\maketitle
\section{Motivation}
At its core, this note discusses improvements on a number of theorems
for significance testing in Markov Chains. The improvements to the Theorem statements are both qualitative and quantitative to enable strong, easily interpretable statistical claims, and include extensions to settings where more structural assumptions lead to huge
improvements in the bounds. This class of  theorems is
particular interest because they do not assume that the chain has
converged to equilibrium. This can be of huge practical importance.

Yet, this tells only part of the story. The development of this class
of algorithms and these particular extensions have been directly
motivated by a question of great contemporary interest; detecting and
quantifying gerrymandering.

The definiteness and correctness provided by these theorem provide
substantial weight in a legal setting. The basic recipe in the
gerrymandering context is the following. One starts a reversible
Markov change from a particular redistricting map which claims to be
typical among maps one which the Markov
chain's invariant distribution  is concentrated.

Operationally, this allows one to rigorously assess the likelihood
of choosing a particular map if one was only considered a specific
collection of non-partisan considerations. These methods (and theorems)
have been used successfully by one of the  authors in Gerrymandering
court cases in Pennsylvania and North Carolina.

The first part of this article gives new results along these lines, extending the work in \cite{outliers} to allow separation of effect size from the quantification of statistical significance. The second
part, in Section \ref{s.product}, develops versions of some of these results in a special setting with a particular structure on the probability space motivated by recent legal proceedings.  In particular, in balancing the federal
one-person-one-vote mandate with the ``keep counties whole'' prevision
of the North Carolina Constitution, the North Carolina courts ruled in
 \emph{Stephenson v.~Bartlett} that a particular algorithm should be used to ``cluster'' the counties into independent county groups which are districted separately.     This gives a product structure to the underlying probability space which can be exploited in theorems designed to take advantage of it.

\section{Introduction}

Consider a reversible Markov Chain $\cM$ whose state-space $\Sigma$ is endowed with some labeling $\omega:\Sigma\to \Re$, and for which $\pi$ is a stationary distribution.   $\cM$, $\pi$, $\omega$, and a fixed integer $k$ determine a vector
\[
p^k_0,p^k_1,\dots,p^k_k
\]
where for each $i$, $p^k_i$ is the probability that for a $k$-step $\pi$-stationary trajectory $X_0,\dots,X_k$, the minimum $\omega$ value occurs at $X_i$.  In other words, $p^k_i$ is the probability that if we choose $X_0$ randomly from the stationary distribution $\pi$ and take $k$ steps in $\cM$ to obtain the the trajectory $X_0,X_1,\dots,X_k$, that we observe that $\omega(X_i)$ is the minimum among $\omega(X_0),\dots,\omega(X_k)$.  Note that if we adopted the convention that we break ties among the values $\omega(X_0),\dots,\omega(X_k)$ randomly, we would have that $p^k_0+\cdots+p^k_k=1$, for any $\cM, \pi,$ and $k$.

At first glance, it might be natural to assume that we must have something like $p^k_i\approx \frac{1}{k+1}$ for all $0\leq i\leq k$.  But this is actually quite far from the truth; \cite{outliers} showed that for some $\cM,\pi,k$, we can have $p^k_0$ as large as essentially $\frac{1}{\sqrt{2\pi k}}$.

As shown in \cite{outliers}, this is essentially the worst possible behavior for $p^k_0$.  In particular, we can generalize the vector $\{p^k_i\}$ defined above as possible: let us define, given $\cM, \pi$, $k$, and $\ep$, the vector
\[
p^k_{0,\ep},p^k_{1,\ep},\dots,p^k_{k,\ep}
\]
where each $p^k_{i,\ep}$ is the probability that $\omega(X_i)$ is among the smallest $\ep$ values in the list $\omega(X_0),\dots,\omega(X_k)$.  Then in \cite{outliers} we proved:

\begin{theorem}\label{t.rootep}
  Given a reversible Markov chain $\cM$ with stationary distribution $\pi$, an $\ep>0$, $k\geq 0$, and With $p^k_{i,\ep}$ defined as above, we have that
  \[
  p^k_{0,\ep}\leq \sqrt{2\e}.
  \]
\end{theorem}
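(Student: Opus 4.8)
The plan is to use reversibility to replace the one-sided quantity $p^k_{0,\ep}$ by a genuinely two-sided one, which is then easy to bound.  First I would use $X_0\sim\pi$ and reversibility to extend $X_0,\dots,X_k$ to a longer stationary trajectory $X_{-k},\dots,X_0,\dots,X_{2k}$ (the joint law on $X_0,\dots,X_k$ is unchanged).  The key observation is then that, conditioned on $X_0=x$, the future block $X_1,\dots,X_k$ and the past block $X_{-1},\dots,X_{-k}$ are independent (Markov property) and have the \emph{same} conditional law (reversibility: $\pi(x)P(x,y)=\pi(y)P(y,x)$ makes the one-step backward kernel equal to $P$).  Hence if $q(x)$ denotes the conditional probability, given $X_0=x$, that $\omega(X_0)$ is among the smallest $\ep$ fraction of $\omega(X_0),\dots,\omega(X_k)$, then the conditional probability, again given $X_0=x$, that $\omega(X_0)$ is among the smallest $\ep$ fraction \emph{both} on the right window and on the left window equals $q(x)^2$; averaging over $X_0\sim\pi$ and applying Cauchy--Schwarz gives
\[
\big(p^k_{0,\ep}\big)^2 \;=\;\E_\pi\!\big[q(X_0)\big]^2\;\le\;\E_\pi\!\big[q(X_0)^2\big]\;=\; \Pr\big(\omega(X_0)\text{ is small on both sides}\big).
\]

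Next I would observe that being among the smallest $\ep(k+1)$ of each of the two size-$(k+1)$ windows forces $\omega(X_0)$ to be among roughly the smallest $2\ep(k+1)$ values of the combined window $X_{-k},\dots,X_k$ of size $2k+1$ (just add the counts of values below $\omega(X_0)$ on the two sides).  So it remains to bound, by $s/(k+1)$, the probability that the \emph{center} of a stationary trajectory of length $2k+1$ lands among the smallest $s$ of the $2k+1$ values.  This I would prove by a short averaging argument: by stationarity the $k+1$ events ``$\omega(X_j)$ is among the smallest $s$ of $\omega(X_{j-k}),\dots,\omega(X_{j+k})$'', $j=0,1,\dots,k$, all have the same probability $p$, and deterministically at most $s$ of them can hold simultaneously --- if $\omega(X_j)$ is among the smallest $s$ of its own centered window then, since that window contains all of $X_0,\dots,X_k$, it is also among the smallest $s$ of $\omega(X_0),\dots,\omega(X_k)$, and a list of $k+1$ numbers has at most $s$ members among its $s$ smallest.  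Hence $(k+1)p\le s$.  Combining the three steps with $s\approx 2\ep(k+1)$ yields $\big(p^k_{0,\ep}\big)^2\le 2\ep$.

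I expect the main conceptual step to be the symmetrization: recognizing that, although $X_0$ is an endpoint of the given trajectory, reversibility together with the Markov property let us treat it as an interior point of a longer one, so ``small on one side'' can be squared into ``small on both sides''.  Everything else is elementary counting; the only fussy point is to fix a tie-breaking/rounding convention for ``the smallest $\ep$ values'' so that the constant comes out as exactly $\sqrt{2\ep}$ rather than $\sqrt{2\ep+O(1/k)}$.
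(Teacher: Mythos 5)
Your proof is correct and follows essentially the same route as the paper: your ``small on both sides'' squaring step is exactly the reversibility argument used in \eqref{l.comparison} (together with Cauchy--Schwarz over $\sigma\sim\pi$), and your averaging bound for the center of a length-$2k$ stationary window is precisely \eqref{l.compare} combined with \eqref{e.sum}, yielding \eqref{l.implies}. The only caveat, which you already flag, is the bookkeeping of the tie/rounding convention so that the two one-sided counts combine to give $\ell$-smallness with $\ell+1\leq 2\ep(k+1)$ and hence the clean bound $2\ep$ rather than $2\ep+O(1/k)$.
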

Note that the example from \cite{outliers} realizing $p^k_0\approx \frac{1}{\sqrt{2\pi k}}$ shows that this theorem is best possible, up to constant factors.


One important application of Theorem \ref{t.rootep} is that it characterizes the statistical significance associated to the result of a natural test for gerrymandering of political districtings.  In particular, consider the following general procedure to evaluate a districting of a state:\\
\smallskip

\textbf{Local Outlier Test}
\begin{enumerate}
\item \label{begin} Beginning from the districting being evaluated,
\item \label{sequence} Make a sequence of random changes to the districting, while preserving some set of constraints imposed on the districtings.
\item Evaluate the partisan properties of each districting encountered (e.g., by simulating elections using past voting data).
\item \label{crafted} Call the original districting ``carefully crafted'' or ``gerrymandered'' if the overwhelming majority of districtings produced by making small random changes are less partisan than the original districting.
\end{enumerate}

Naturally, the test described above can be implemented so that it precisely satisfies the hypotheses of Theorem \ref{t.rootep}.  For this purpose, a (very large) set of comparison districtings are defined, to which the districting being evaluated belongs.  For example, the comparison districtings may be the districtings built out of Census blocks (or some other unit) which are contiguous, equal in population up to some specified deviation, or include other constraints.   A Markov chain $\cM$ is defined on this set of districtings, where transitions in the chain correspond to changes in districtings.  (For example, a transition may correspond to randomly changing the district assignment of a randomly chosen Census block which currently borders more than one district, subject to the constraints imposed on the comparison set.)  The ``random changes'' from Step \ref{sequence} will then be precisely governed by the transition probabilities of the Markov chain $\cM$.  By designing $\cM$ so that the uniform distribution $\pi$ on the set of comparison districtings $\Sigma$ is a stationary distribution for $\cM$, Theorem \ref{t.rootep} gives an upper bound on the false-positive rate (in other words, global statistical significance) for the ``gerrymandered'' declaration when it is made in Step \ref{crafted}.

Apart from its application to gerrymandering, Theorem \ref{t.rootep} has a simple informal interpretation for the general behavior of reversible Markov chains, namely: \emph{typical (i.e., stationary) states are unlikely to change in a consistent way under a sequence of chain transitions}, with a best-possible quantification of this fact (up to constant factors).

Also, in the general setting of a reversible Markov chain, the theorem leads to a simple quantitative procedure for asserting rigorously that $\sigma_0$ is atypical with respect to $\pi$ without knowing the mixing time of $\cM$: simply observe a random trajectory $\sigma_0=X_0,X_1,X_2\dots,X_k$ from $\sigma_0$ for any fixed $k$.  If $\omega(\sigma_0)$ is an $\ep$-outlier among $\omega(X_0),\dots,\omega(X_k)$, then this is statistically significant at $\sqrt{2\ep}$ against the null hypothesis that $\sigma_0\sim \pi$.

This quantitative test is potentially useful because $\sqrt{2\ep}$ converges quickly enough to 0 as $\ep\to 0$; in particular, it is possible to obtain good statistical significance from observations which can be made with reasonable computational resources.  Of course, faster convergence to 0 would be even better, but, as already noted, $p\approx \sqrt{\ep}$ is roughly a best possible upper bound.

Unknown to the authors at the time of the publication of \cite{outliers}, a 1989 paper of Besag and Clifford described a test related to that based on Theorem \ref{t.rootep}, which has essentially a one-line proof, which we discuss in Section \ref{s.back}:
\begin{theorem}[Besag and Clifford serial test]\label{t.GCserial}
  Fix any number $k$ and suppose that $\sigma_0$ is chosen from a stationary distribution $\pi$, and that $\xi$ is chosen uniformly in $\{0,\dots,k\}$.  Consider two independent trajectories $Y_0,Y_1,\dots$ and $Z_0,Z_1,\dots$ in the reversible Markov Chain $\cM$ (whose states have real-valued labels) from $Y_0=Z_0=\sigma_0$. If we choose $\sigma_0$ from a stationary distribution $\pi$ of $\cM$, then for any $k$ we have that
  \[
\Pr\brac{\om(\s_0)\text{\textnormal{ is an $\ep$-outlier among }}\om(\s_0),\om(Y_1),\dots,\om(Y_\xi),\om(Z_1),\dots,\om(Z_{k-\xi})}\leq\ep.
  \]
\end{theorem}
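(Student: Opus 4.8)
The plan is to exhibit an exchangeability argument that makes the statement nearly tautological. The key observation is that the combined object $(\s_0, Y_1,\dots,Y_\xi, Z_1,\dots,Z_{k-\xi})$ should, as an unordered-at-a-random-position list, have the same distribution as a genuine $k$-step stationary trajectory reindexed at a uniformly random starting point. Concretely, I would first note that since $\cM$ is reversible with stationary distribution $\pi$, and $\s_0\sim\pi$, the reversed path $Y_\xi, Y_{\xi-1},\dots,Y_1,\s_0$ is itself a (partial) $\pi$-stationary trajectory; gluing it to the forward path $Z_1,\dots,Z_{k-\xi}$ at the shared endpoint $\s_0$ produces a sequence $W_0, W_1,\dots, W_k$ which is exactly a $\pi$-stationary trajectory of length $k$, with the special element $\s_0$ sitting at position $\xi$. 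Because $\xi$ is chosen uniformly and independently in $\{0,\dots,k\}$, the pair (stationary $k$-trajectory, uniformly random marked index) has precisely the law of $(W_0,\dots,W_k)$ together with the marked position $\xi$.

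Second, I would unwind what the $\ep$-outlier event means for this reindexed object: $\om(\s_0)$ is an $\ep$-outlier among the $k+1$ values $\om(W_0),\dots,\om(W_k)$ iff $\om(W_\xi)$ is among the smallest $\lceil \ep(k+1)\rceil$ of those values (using whatever tie-breaking convention makes the count exact). Conditioning on the trajectory $W_0,\dots,W_k$, the index $\xi$ is still uniform on $\{0,\dots,k\}$ and independent of everything; hence the conditional probability that $W_\xi$ lands among the smallest $\ep$-fraction is at most $\ep$ (it is exactly $\lceil \ep(k+1)\rceil/(k+1)\le \ep$ if we are careful, or trivially $\le\ep$ under the randomized tie-breaking convention mentioned in the introduction, under which the ranks are a uniform random permutation independent of $\xi$). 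Averaging over the trajectory gives the claimed bound $\le\ep$.

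The one point that needs care — and which I expect to be the main (though minor) obstacle — is verifying rigorously that the spliced sequence $W_0,\dots,W_k$ really has the law of a $\pi$-stationary $k$-step trajectory. This requires the reversibility identity in the form $\pi(x)P(x,y)=\pi(y)P(y,x)$ to check that running the chain "backwards" from $\s_0$ for $\xi$ steps yields the correct joint law, and then that concatenating an independent backward block and forward block at the common state $\s_0$ — with the split point $\xi$ random — reproduces the Markov property of a single length-$k$ trajectory. This is a standard computation (condition on $\xi=j$, write both joint densities as products over edges, and invoke detailed balance edge by edge to match them), but it is the step where the hypotheses of reversibility and stationarity of $\pi$ are actually used, so I would present it explicitly. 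Everything after that is the elementary "uniform index hits the bottom $\ep$-fraction with probability $\le\ep$" argument.
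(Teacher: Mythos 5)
Your proposal is correct and is essentially the paper's own argument: the paper likewise uses reversibility to observe that $Y_\xi,\dots,Y_1,\s_0,Z_1,\dots,Z_{k-\xi}$ is a $\pi$-stationary trajectory with $\s_0$ at a uniformly random position, and then bounds the probability by $\frac{1}{k+1}\sum_{j=0}^k \r^k_{j,\ell}\le \frac{\ell+1}{k+1}\le\ep$, which is your ``uniform index hits the bottom $\ep$-fraction'' step with the averaging over the trajectory and over $\xi$ performed in the opposite order. The splicing fact you flag as needing care is exactly the paper's Observation \ref{o.stat}, proved there by the same detailed-balance computation you describe.
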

Here, a real number $a_0$ is an \emph{$\ep$-outlier} among $a_0,\dots,a_k$ if
\[
\#\set{i\in \{0,\dots,k\}\mid a_i\leq a_0}\leq \ep(k+1).
\]
In particular, the striking thing about Theorem \ref{t.GCserial} is that it achieves a best-possible dependence on the parameter $\ep$.  (Notice that $\ep$ would be the correct value of the probability if, for example, the Markov chain is simply a collection of independent random samples.)  The sacrifice is in Theorem \ref{t.GCserial}'s slightly more complicated intuitive interpretation, which would be:  \emph{typical (i.e., stationary) states are unlikely to change in a consistent way under two sequences of chain transitions of random complementary lengths.}  In particular, in applications of these statistical tests to aspects of public policy, it is desirable to have tests with simple, intuitive interpretations.  To enable better significance testing in this sphere, one goal of the present note is to prove a theorem enabling Markov chain significance testing which is intuitively interpretable in the sense of Theorem \ref{t.rootep}, while having linear dependence on $\ep$, as in Theorem \ref{t.GCserial}.

\bigskip

One common feature of the tests based on Theorem \ref{t.rootep} and \ref{t.GCserial} is the use of randomness.  In particular, the probability space at play in these theorems includes both the random choice of $\sigma_0$ assumed by the null hypothesis and the random steps taken by the Markov chain from $\sigma_0$.  Thus the measures of ``how (globally) unusual'' $\sigma_0$ is with respect to its performance in the local outlier test and ``how sure'' we are that $\sigma_0$ is unusual in this respect are intertwined in the final $p$-value.  In particular, the effect size and the statistical significance are not explicitly separated.

To further the goal of simplifying the interpretation of the results of these tests, our approach in this note will also show that tests like these can be efficiently used in a way which separates the measure of statistical significance from the question of the magnitude of the effect.  In particular, recalling the probabilities $p_{0,\ep}^k,\dots,p_{k,\ep}^k$ defined previously, let us define the probability $p_{0,\ep}^k(\sigma_0)$ to be the probability that on a trajectory $\sigma_0=X_0,X_1,\dots,X_k$, $\omega(\sigma_0)$ is among the smallest $\ep$ fraction of the list $\omega(X_0),\dots,\om(X_k)$.  Now we make the following definition:
\begin{definition}\label{d.outlier}
With respect to $k$, the state $\sigma_0$ is an \emph{$(\ep,\alpha)$-outlier} in $\cM$ if, among all states in $\cM$, $p_{0,\ep}^k(\sigma_0)$ is in the largest $\alpha$ fraction of the values of $p_{0,\ep}^k(\sigma)$ over \emph{all} states $\sigma\in \cM$, weighted according to $\pi$. 
\end{definition}
In particular, being an $(\ep,\alpha)$-outlier measures the likelihood of $\sigma_0$ to fail the local outlier test, ranked against \emph{all other states} $\sigma\sim \pi$ of the chain $\cM$.   For example, fix $k=10^9$.  If $\sigma_0$ is a $(10^{-6},10^{-5})$-outlier in $\cM$ and $\pi$ is the uniform distribution, this means that among \emph{all} states $\s\in \cM$, $\sigma_0$ is more likely than all but a $10^{-5}$ fraction of states to have an $\omega$-value in the bottom $10^{-6}$ values $\omega(X_0),\omega(X_1),\dots,\omega(X_{10^9})$.  Note that the probability space underlying the ``more likely'' claim here just concerns the choice of the random trajectory $X_1,\dots,X_{10^9}$ from $\cM$.

Note that whether $\sigma_0$ is a $(\ep,\alpha)$-outlier is a deterministic question about the properties of $\sigma_0, \cM,$ and $\omega$.  Thus it is a deterministic measure (defined in terms of certain probabilities) of the extent to which $\sigma_0$ is unusual (globally, in all of $\cM$) with respect to it's local fragility in the chain.

The following theorem enables one to assert statistical significance for the property of being an $(\ep,\alpha)$-outlier.  In particular, while tests based on Theorems \ref{t.rootep} and \ref{t.GCserial} take as their null hypothesis that $\sigma_0\sim \pi$, the following theorem takes as its null hypothesis merely that $\sigma_0$ is not an $(\ep,\alpha)$-outlier.  

\begin{theorem}\label{t.outlier}
  Consider $m$ independent trajectories
  \begin{align*}
    \cT^1= &\, (X^1_0,X^1_1,\dots,X^1_k),\\
    &\vdots\\
    \cT^m= &\, (X^m_0,X^m_1,\dots,X^m_k)
  \end{align*}
  of length $k$ in the reversible Markov Chain $\cM$ (whose states have real-valued labels) from a common starting point $X^1_0=\dots=X^m_0=\sigma_0$.  Define the random variable $\rho$ to be the number of trajectories $\cT^i$ on which $\sigma_0$ is an $\ep$-outlier.

  If $\sigma_0$ is not an $(\ep,\alpha)$-outlier, then 
  \begin{equation}\label{l.rhobound}
  \Pr\brac{\rho\geq m\sqrt{\tfrac{2\ep}{\alpha}}+r}\leq e^{-\min(r^2\sqrt{\alpha/2\ep}/3m,r/3)}.
  \end{equation}
\end{theorem}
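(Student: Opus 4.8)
The plan is to reduce the statement to the single deterministic inequality $p^k_{0,\ep}(\sigma_0)\le\sqrt{2\ep/\alpha}$, which I claim holds whenever $\sigma_0$ is not an $(\ep,\alpha)$-outlier. Granting this, the tail bound \eqref{l.rhobound} is routine: since $\sigma_0$ is fixed, the trajectories $\cT^1,\dots,\cT^m$ are i.i.d., and each of them makes $\sigma_0$ an $\ep$-outlier with probability exactly $p\eqdef p^k_{0,\ep}(\sigma_0)$, so $\rho\sim\Bin(m,p)$. As $p\le q\eqdef\sqrt{2\ep/\alpha}$, the variable $\rho$ is stochastically dominated by $\Bin(m,q)$, and the standard multiplicative Chernoff bound $\Pr(\Bin(m,q)\ge(1+\delta)mq)\le\exp(-mq\min(\delta,\delta^2)/3)$, applied with $\delta=r/(mq)$, gives
\[
\Pr(\rho\ge mq+r)\le\exp\!\big(-\min(r^2/(3mq),\,r/3)\big);
\]
substituting $mq=m\sqrt{2\ep/\alpha}$, so that $r^2/(3mq)=r^2\sqrt{\alpha/2\ep}/(3m)$, this is exactly \eqref{l.rhobound}.

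So the real work is the deterministic bound $p^k_{0,\ep}(\sigma_0)\le\sqrt{2\ep/\alpha}$, and I expect the crux to be that the first-moment input $\E_{\sigma\sim\pi}[p^k_{0,\ep}(\sigma)]\le\sqrt{2\ep}$ coming from Theorem \ref{t.rootep} is too weak: Markov's inequality would only give $p^k_{0,\ep}(\sigma_0)\le\sqrt{2\ep}/\alpha$, which is typically $>1$ and hence useless. What is needed is the genuinely quadratic bound
\[
\E_{\sigma\sim\pi}\big[(p^k_{0,\ep}(\sigma))^2\big]\le 2\ep .
\]
To prove this I would run two independent length-$k$ trajectories $Y=(\sigma_0,Y_1,\dots,Y_k)$ and $Z=(\sigma_0,Z_1,\dots,Z_k)$ from a common start $\sigma_0\sim\pi$; for fixed start they are independent, so $\E_{\sigma\sim\pi}[(p^k_{0,\ep}(\sigma))^2]$ is precisely the probability that $\sigma_0$ is an $\ep$-outlier on $Y$ \emph{and} on $Z$. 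The point is that when that happens, then for \emph{every} $\xi\in\{0,\dots,k\}$ the number of the $k+1$ values $\omega(\sigma_0),\omega(Y_1),\dots,\omega(Y_\xi),\omega(Z_1),\dots,\omega(Z_{k-\xi})$ that are $\le\omega(\sigma_0)$ is at most $1+(\ep(k+1)-1)+(\ep(k+1)-1)<2\ep(k+1)$, i.e.\ $\sigma_0$ is a $2\ep$-outlier on this two-arm trajectory. Hence the event in question has probability at most that of $\sigma_0$ being a $2\ep$-outlier on the Besag--Clifford two-arm trajectory (with $\xi$ uniform and independent of the chain), which Theorem \ref{t.GCserial}, applied with $2\ep$ in place of $\ep$, bounds by $2\ep$.

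It then remains to combine this with Definition \ref{d.outlier}. Assume $\sigma_0$ is not an $(\ep,\alpha)$-outlier and set $t=p^k_{0,\ep}(\sigma_0)$. Then the set $S=\{\sigma:p^k_{0,\ep}(\sigma)\ge t\}$ satisfies $\pi(S)\ge\alpha$: if $\pi(S)<\alpha$ then $S$ --- which contains $\sigma_0$ and consists exactly of the states attaining the largest values of $p^k_{0,\ep}$ --- would fit inside the top-$\alpha$ $\pi$-mass, forcing $\sigma_0$ to be an $(\ep,\alpha)$-outlier. (This is the only place where the exact tie-breaking convention in Definition \ref{d.outlier} enters, and it is harmless.) Consequently
\[
\alpha\,t^2\le\E_{\sigma\sim\pi}\big[\mathbf{1}[p^k_{0,\ep}(\sigma)\ge t]\,(p^k_{0,\ep}(\sigma))^2\big]\le\E_{\sigma\sim\pi}\big[(p^k_{0,\ep}(\sigma))^2\big]\le 2\ep,
\]
whence $p^k_{0,\ep}(\sigma_0)=t\le\sqrt{2\ep/\alpha}$, as required.
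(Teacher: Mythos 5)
Your proposal is correct and follows essentially the same route as the paper: reduce to the deterministic claim $p^k_{0,\ep}(\sigma_0)\le\sqrt{2\ep/\alpha}$, establish the second-moment bound $\E_{\sigma\sim\pi}[(p^k_{0,\ep}(\sigma))^2]\le 2\ep$ by viewing two independent arms from a stationary start as one longer reversible trajectory, and finish with the multiplicative Chernoff bound. The only (harmless) deviation is the last step of the second-moment bound, where the paper invokes Theorem \ref{t.twopaths} (the intersection event forces $\sigma_0$ to be an $\ep$-outlier on the concatenated length-$(2k+1)$ trajectory), while you instead observe that it forces a $2\ep$-outlier on the Besag--Clifford randomly split trajectory and apply Theorem \ref{t.GCserial} with $2\ep$; both yield the same constant and rest on the same averaging inequality.
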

In particular, apart from separating measures of statistical significance from the quantification of a local outlier, Theorem \ref{t.outlier} connects the intuitive Local Outlier Test tied to Theorem \ref{t.rootep} (which motivates the definition of a $(\ep,\alpha)$-outlier) to the better quantitative dependence on $\ep$ in Theorem \ref{t.GCserial}.

To compare the quantitative performance of Theorem \ref{t.outlier} to Theorems \ref{t.rootep} and \ref{t.GCserial}, consider the case of a state $\sigma_0$ for which a random trajectory $\sigma_0=X_0,X_1,\dots,X_k$ is likely (say with some constant probability $p'$) to find $\sigma_0$ an $\ep'$-outlier.  For Theorem \ref{t.rootep}, significance at $p\approx \sqrt{2\ep}$ would be obtained\footnote{Multiple tests have limited utility here or with Theorem \ref{t.GCserial} since there is no independence (the null hypothesis $\sigma_0\sim \pi$ is not being resampled).  In particular, multiple runs might be done merely until a trajectory is seen on which $\sigma_0$ is indeed an $\ep'$ outlier (requiring $1/p'$ runs, on average), in conjunction with multiple hypothesis testing.}, while using Theorem \ref{t.GCserial}, one would hope to obtain significance of $\approx \ep'$.    Applying Theorem \ref{t.outlier}, we would expect to see $\rho$ around $m\cdot p'$.  In particular, we could demonstrate that $\sigma_0$ is an $(\ep',\alpha)$ outlier for $\alpha=\frac{3\ep}{(p')^2}$ (a linear dependence on $\ep$) at a $p$-value which can be made arbitrarily small (at an exponential rate) as we increase the number of observed trajectories $m$.  As we will see in Section \ref{s.outlier}, the exponential tail in \eqref{l.rhobound} can be replaced by a binomial tail.  In particular, the following special case applies:
\begin{theorem}\label{t.outall}
  With $\cT^1,\dots,\cT^m$ as in Theorem \ref{t.outlier}, we have that if $\s_0$ is not an $(\ep,\alpha)$ outlier, then
  \[
  \Pr\brac{\sigma_0\text{\textnormal{ an $\ep$-outlier on all of }}\cT^1,\dots,\cT^m}\leq \bfrac{2\ep}{\alpha}^{m/2}.
  \]
\end{theorem}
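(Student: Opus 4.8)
The plan is to reduce the statement to a single pointwise inequality about $\sigma_0$ and then invoke independence of the $m$ trajectories. First I would record the elementary but crucial observation that, since $\cT^1,\dots,\cT^m$ are independent and each is a length-$k$ trajectory of $\cM$ started at $\sigma_0$, the event ``$\sigma_0$ is an $\ep$-outlier on $\cT^i$'' has probability exactly $p^k_{0,\ep}(\sigma_0)$ for each $i$ and these events are independent, so
\[
\Pr\!\left(\sigma_0\ \text{an }\ep\text{-outlier on all of }\cT^1,\dots,\cT^m\right)=\big(p^k_{0,\ep}(\sigma_0)\big)^m .
\]
It therefore suffices to prove that if $\sigma_0$ is \emph{not} an $(\ep,\alpha)$-outlier then $p^k_{0,\ep}(\sigma_0)\le\sqrt{2\ep/\alpha}$, since then the right-hand side is at most $(2\ep/\alpha)^{m/2}$.

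The heart of the matter is the second-moment estimate $\E_{\sigma\sim\pi}\big[\big(p^k_{0,\ep}(\sigma)\big)^2\big]\le 2\ep$. To prove it, fix $\sigma\sim\pi$, let $Y_0=\sigma,Y_1,\dots,Y_k$ and $Z_0=\sigma,Z_1,\dots,Z_k$ be two \emph{independent} length-$k$ trajectories of $\cM$, and let $G$ be the event that $\om(\sigma)$ is an $\ep$-outlier both among $\om(Y_0),\dots,\om(Y_k)$ and among $\om(Z_0),\dots,\om(Z_k)$. Conditionally on $\sigma$, $G$ is the intersection of two independent events of probability $p^k_{0,\ep}(\sigma)$ each, so $\Pr(G)=\E_{\sigma\sim\pi}\big[(p^k_{0,\ep}(\sigma))^2\big]$. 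On $G$, at most $\ep(k+1)-1$ of $\om(Y_1),\dots,\om(Y_k)$ and at most $\ep(k+1)-1$ of $\om(Z_1),\dots,\om(Z_k)$ are $\le\om(\sigma)$, so for \emph{every} $\xi\in\{0,\dots,k\}$ the $(k+1)$-entry list $\om(\sigma),\om(Y_1),\dots,\om(Y_\xi),\om(Z_1),\dots,\om(Z_{k-\xi})$ has at most $2\ep(k+1)-1\le 2\ep(k+1)$ entries $\le\om(\sigma)$; that is, $\om(\sigma)$ is a $2\ep$-outlier in it. Now draw $\xi$ uniformly from $\{0,\dots,k\}$, independently of $Y,Z$: then $G$ is contained in the event that $\om(\sigma)$ is a $2\ep$-outlier among $\om(\sigma),\om(Y_1),\dots,\om(Y_\xi),\om(Z_1),\dots,\om(Z_{k-\xi})$, whose probability Theorem~\ref{t.GCserial} (applied with $2\ep$ in place of $\ep$) bounds by $2\ep$. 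Hence $\E_{\sigma\sim\pi}\big[(p^k_{0,\ep}(\sigma))^2\big]=\Pr(G)\le 2\ep$.

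To conclude, set $q=p^k_{0,\ep}(\sigma_0)$ and $A=\{\sigma\in\Sigma:\ p^k_{0,\ep}(\sigma)\ge q\}$. If $\sigma_0$ is not an $(\ep,\alpha)$-outlier then, by Definition~\ref{d.outlier}, $p^k_{0,\ep}(\sigma_0)$ is not among the largest $\alpha$ fraction of the values $p^k_{0,\ep}(\sigma)$ weighted by $\pi$, hence $\pi(A)\ge\alpha$. Therefore
\[
2\ep\ \ge\ \E_{\sigma\sim\pi}\big[(p^k_{0,\ep}(\sigma))^2\big]\ \ge\ \E_{\sigma\sim\pi}\big[(p^k_{0,\ep}(\sigma))^2\,\mathbf{1}[\sigma\in A]\big]\ \ge\ q^2\,\pi(A)\ \ge\ q^2\alpha ,
\]
so $q\le\sqrt{2\ep/\alpha}$, which together with the first paragraph proves the theorem.

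I expect the substantive step to be the second-moment estimate $\E_\pi[(p^k_{0,\ep})^2]\le 2\ep$: it is the only point where the Markov-chain hypothesis does genuine work (through Theorem~\ref{t.GCserial}), and the conceptual hurdle is recognizing that one should control the \emph{second} moment rather than apply Markov's inequality to Theorem~\ref{t.rootep}, which only yields $q\le\sqrt{2\ep}/\alpha$ and hence the weaker final bound $(2\ep/\alpha^2)^{m/2}$. I would take care with the interleaving bookkeeping (that removing the entry $\om(\sigma)$ itself leaves at most $\ep(k+1)-1$ small values on each side, and that the combined list has exactly $k+1$ entries so that ``$2\ep$-outlier'' is the right notion there) and with the precise boundary convention in Definition~\ref{d.outlier}; the displayed inequalities are robust to whether this yields $\pi(A)\ge\alpha$ or only $\pi(\{p^k_{0,\ep}>q\})\ge\alpha$, since $(p^k_{0,\ep}(\sigma))^2\ge q^2$ on that smaller set as well. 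Finally, I would remark that the bound $q\le\sqrt{2\ep/\alpha}$ exhibits the count $\rho$ of Theorem~\ref{t.outlier} as a $\Bin(m,q)$ random variable stochastically dominated by $\Bin(m,\sqrt{2\ep/\alpha})$, which immediately gives both the binomial tail above and, via a standard Chernoff estimate, the exponential tail \eqref{l.rhobound}.
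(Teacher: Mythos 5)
Your proof is correct and follows essentially the same architecture as the paper's: reduce the theorem to the pointwise claim $p^k_{0,\ep}(\sigma_0)\le\sqrt{2\ep/\alpha}$, obtain that claim from the second-moment bound $\E_{\sigma\sim\pi}\big[(p^k_{0,\ep}(\sigma))^2\big]\le 2\ep$ together with the non-outlier hypothesis, and then use independence of the $m$ trajectories to get the $m$-th power. The one place you diverge is in establishing the second-moment bound: the paper observes pointwise that $p^k_{0,\ep}(\sigma)^2\le p^{2k}_{k,\ep}(\sigma)$ (the midpoint of a $2k$-step stationary trajectory) and then invokes Theorem \ref{t.twopaths}, whereas you embed the two-sided outlier event into the Besag--Clifford interleaved list at level $2\ep$ and invoke Theorem \ref{t.GCserial}; your bookkeeping there (at most $\ep(k+1)-1$ small entries on each side, $k+1$ entries total) is right, and the two routes are equally valid since the averaging over the split point $\xi$ in Theorem \ref{t.GCserial} is exactly the averaging over $j$ that proves Theorem \ref{t.twopaths}.
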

Theorem \ref{t.outall} also has advantages from the standpoint of avoiding the need to correct for multiple hypothesis testing, as we discuss in Section \ref{s.multiple}.

\bigskip

To prove Theorem \ref{t.outlier}, we will prove the following, which has a quantitative dependence on $\ep$ which is nearly as strong as in Theorem \ref{t.GCserial}, while eliminating the need for the random choice of $\xi$ there.

\begin{theorem}\label{t.twopaths}
  Consider two independent trajectories $Y_0,\dots,Y_k$ and $Z_0,\dots,Z_k$ in the reversible Markov Chain $\cM$ (whose states have real-valued labels) from a common starting point $Y_0=Z_0=\sigma_0$. If we choose $\sigma_0$ from a stationary distribution $\pi$ of $\cM$, then for any $k$ we have that
  \[
\Pr\brac{\om(\s_0)\text{\textnormal{ is an $\ep$-outlier among }}\om(\s_0),\om(Y_1),\dots,\om(Y_k),\om(Z_1),\dots,\om(Z_k)}< 2\ep.
  \]
\end{theorem}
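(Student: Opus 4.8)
The plan is to reduce Theorem~\ref{t.twopaths} to a statement about a single trajectory by a symmetrization/reversal argument, similar in spirit to the proof of Theorem~\ref{t.GCserial}, but keeping the two path-lengths fixed at $k$ rather than randomizing them. Consider the reversed trajectory $Y_k,Y_{k-1},\dots,Y_1,Y_0=\sigma_0$ concatenated with the forward trajectory $\sigma_0=Z_0,Z_1,\dots,Z_k$. Because $\cM$ is reversible and $\sigma_0\sim\pi$, reversibility says that the law of $(Y_k,\dots,Y_0)$ is the same as the law of a forward $\pi$-stationary trajectory of length $k$; moreover, conditioned on $Y_0=\sigma_0$, the reversed $Y$-path and the forward $Z$-path are independent. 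So the concatenated list $\omega(Y_k),\dots,\omega(Y_1),\omega(\sigma_0),\omega(Z_1),\dots,\omega(Z_k)$ is, up to this reindexing, a $\pi$-stationary trajectory $W_0,W_1,\dots,W_{2k}$ of length $2k$, with $\sigma_0$ sitting at the (non-random!) center position $W_k$.

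The key step is then to bound, for a $\pi$-stationary trajectory $W_0,\dots,W_{2k}$, the probability that $\omega(W_k)$ is an $\ep$-outlier among $\omega(W_0),\dots,\omega(W_{2k})$. The natural tool is Theorem~\ref{t.rootep} applied to the length-$2k$ chain: it tells us that the probability the \emph{first} coordinate $W_0$ is an $\ep$-outlier is at most $\sqrt{2\ep}$. But we need the same for the middle coordinate $W_k$, and here I would use the following observation: for a $\pi$-stationary trajectory, the event ``$W_k$ is an $\ep$-outlier among $W_0,\dots,W_{2k}$'' can be analyzed via the two half-trajectories $W_k,W_{k-1},\dots,W_0$ and $W_k,W_{k+1},\dots,W_{2k}$, each of which (by reversibility) is a $\pi$-stationary trajectory of length $k$ started at $W_k$. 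If $\omega(W_k)$ is an $\ep$-outlier among all $2k+1$ values, then it must be at least a $2\ep$-outlier among one of the two half-lists of $k+1$ values (pigeonhole on which half contains at least half of the $\leq \ep(2k+1)$ values that are $\le \omega(W_k)$). Each of those events has probability at most $\sqrt{2\cdot 2\ep}=2\sqrt{\ep}$ by Theorem~\ref{t.rootep}, giving $4\sqrt{\ep}$ by a union bound --- which is the wrong order. So the pigeonhole route through Theorem~\ref{t.rootep} is too lossy; we want $2\ep$, not $\Theta(\sqrt\ep)$.

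Instead, the right approach mirrors the one-line Besag--Clifford argument directly. Condition on the \emph{unordered} multiset $S=\{\omega(W_0),\dots,\omega(W_{2k})\}$ together with the sequence of transitions realized (equivalently, work on the reversed-chain coupling): the point of the Besag--Clifford construction is that, after conditioning appropriately, the distinguished index is exchangeable among the $2k+1$ positions, so the probability it lands among the bottom $\ep(2k+1)$ is exactly $\le \ep$ --- but this exchangeability is exactly what \emph{fails} when we fix the center position and needs the random $\xi$. What saves us is that we only want a bound of $2\ep$, not $\ep$. Here is the mechanism I would use: run the $\xi$-randomized Besag--Clifford test but note that for our fixed-length setup the splitting point is $k$ rather than a uniform $\xi\in\{0,\dots,2k\}$; since $k$ is within a factor $2$ of every value $\xi$ could take (more precisely, conditioning on the center costs a factor equal to the ratio $(2k+1)/(k+1)<2$ when comparing the "position $k$ is an outlier" probability to the averaged-over-$\xi$ probability that Besag--Clifford controls by $\ep$), we lose only a factor strictly less than $2$. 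Concretely: by exchangeability over $\xi$, $\frac{1}{2k+1}\sum_{\xi=0}^{2k}\Pr(\text{outlier with split }\xi)\le \ep$, and a monotonicity/symmetry argument (the middle split is the "hardest", i.e. maximizes this probability, since the outlier event is most constrained when the two halves are most balanced — this needs to be checked but is intuitively the worst case) shows $\Pr(\text{outlier with split }k)$ is at most twice the average. I expect the main obstacle to be making this last comparison rigorous: either proving that the balanced split $\xi=k$ is genuinely the maximizer of the per-split outlier probability, or, more robustly, finding a direct coupling that bounds the center-split probability by $2\ep$ without routing through the other splits at all --- for instance, by splitting the concatenated length-$2k$ trajectory at a \emph{random} point near $k$ and absorbing the discrepancy into the factor $2$, or by a direct pigeonhole on the two length-$k$ halves combined with a sharp (rather than $\sqrt{2\ep}$) bound that exploits that each half is itself only being asked about being a $2\ep$-type outlier in a way that chains multiplicatively. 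Getting the constant down to exactly $2\ep$ (rather than some larger constant times $\ep$) is where the real care is needed.
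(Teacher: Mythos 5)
Your setup is exactly the paper's: concatenating the reversed $Y$-path with the $Z$-path to get a $\pi$-stationary trajectory $W_0,\dots,W_{2k}$ with $\sigma_0$ frozen at the center (this is Observation \ref{o.stat}), and you correctly diagnose that routing through Theorem \ref{t.rootep} gives only $O(\sqrt{\ep})$. But the step you flag as ``the main obstacle'' is a genuine gap, and the route you sketch for it does not close. You propose to bound $\Pr(\text{center is an outlier})$ by twice the average over all splits $\xi\in\{0,\dots,2k\}$, justified by the claim that the balanced split maximizes the per-split probability. Even granting that claim, being the \emph{maximum} of $2k+1$ nonnegative numbers whose average is at most $\ep$ only bounds the center term by $(2k+1)\ep$; a maximizer can exceed the average by a factor of $2k+1$, so no factor-$2$ comparison follows from maximality plus the Besag--Clifford average. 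You would need an independent argument, and none is supplied.

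The missing idea in the paper is a comparison not to other splits of the same length-$2k$ trajectory, but to \emph{shorter windows} of it. For each $j=0,\dots,k$, the window $(W_{k-j},\dots,W_{2k-j})$ is itself a $\pi$-stationary trajectory of length $k$ in which the center $W_k$ occupies position $j$; and if $\om(W_k)$ is $\ell$-small among all $2k+1$ values then it is a fortiori $\ell$-small within every such window. Hence $\r_{k,\ell}^{2k}\leq \r_{j,\ell}^{k}$ for every $j$ (this is \eqref{l.compare} --- note the superscript drops from $2k$ to $k$), and combining with $\sum_{j=0}^{k}\r_{j,\ell}^{k}\leq \ell+1$ (linearity of expectation, your ``exchangeability'' sum but for the length-$k$ chain) gives $\r_{k,\ell}^{2k}\leq \frac{\ell+1}{k+1}<2\cdot\frac{\ell+1}{2k+1}\leq 2\ep$. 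Your closing remark about ``a direct pigeonhole on the two length-$k$ halves'' gestures in this direction, but the actual mechanism --- restriction to all $k+1$ sliding windows, each a stationary trajectory of half the length, followed by averaging over the window index --- is absent, and it is the entire content of the proof beyond Observation \ref{o.stat}.
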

Note that Theorem \ref{t.twopaths} is equivalent to the statement that the probabilities $p^k_{i,\ep}$ always satisfy
\begin{equation}\label{l.restatetwopaths}
p^{2k}_{k,\ep}<2\ep.
\end{equation}
\begin{remark}\label{r.badcase}
  
As in the case of Theorem \ref{t.rootep}, it seems like an interesting question to investigate the tightness of the constant 2; we will see in Section \ref{s.product} that there are settings where the impact of this constant is inflated to have outsize-importance. We point out here that at least for the case of $k=1$, $\ep=1/3$, $\rho^{2}_{1,\frac 1 3}$ can be at least as large as $\frac 1 2$, showing that the constant 2 in \eqref{l.restatetwopaths} cannot be replaced by a constant less than $\tfrac 3 2$, in general.  To see this, consider, for example, a bipartite complete graph $K_{n,n}$, where the labels of the vertices of one side are $1,\dots,n$ and the other are $n+1,\dots,2n$.  For the Markov chain given by the random walk on this undirected graph, we have that $\rho^{2}_{1,\frac 1 3}=\frac 1 2$.  Note that for this example, it is still the case that $\rho^{2k}_{k,\ep}\to \ep$ as $k\to \infty$, leaving open the possibility that the $2$ in \eqref{l.restatetwopaths} can be replaced with an expression asymptotically equivalent to 1.
\end{remark}


The following theorem is the analog of Theorem \ref{t.outlier} obtained when one uses an analog of Besag and Clifford's Theorem \ref{t.GCserial} in place of \ref{t.twopaths} in the proof.  This version pays the price of using a random $k$ instead of a fixed $k$ for the notion of an $(\ep,\alpha)$-outlier, but has the advantage that the constant $2$ is eliminated from the bound.  (Note that as in Theorem \ref{t.outlier}, the notion of $(\ep,\alpha)$-outlier used here is still just defined with respect to a single path, although Theorem \ref{t.GCserial} depends on using two independent trajectories.)  

\begin{theorem}\label{t.GCoutlier}
  Consider $m$ independent trajectories
  \begin{align*}
    \cT^1= &\, (X^1_0,X^1_1,\dots,X^1_{k_1}),\\
    &\vdots\\
    \cT^m= &\, (X^m_0,X^m_1,\dots,X^m_{k_m})
  \end{align*}
  in the reversible Markov Chain $\cM$ (whose states have real-valued labels) from a common starting point $X^1_0=\dots=X^m_0=\sigma_0$, where each of the lengths $k_i$ are independently drawn random numbers from a geometric distribution.  Define the random variable $\rho$ to be the number of trajectories $\cT^i$ on which $\sigma_0$ is an $\ep$-outlier.

  If $\sigma_0$ is not an $(\ep,\alpha)$-outlier with respect to $k$ drawn from the geometric distribution, then
    \begin{equation}\label{l.GCrhobound}
  \Pr\brac{\rho\geq m\sqrt{\tfrac{\ep}{\alpha}}+r}\leq e^{-\min(r^2\sqrt{\alpha/\ep}/3m,r/3)}.
  \end{equation}
\end{theorem}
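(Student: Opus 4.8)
The plan is to run the proof of Theorem~\ref{t.outlier} essentially unchanged, replacing the appeal to Theorem~\ref{t.twopaths} by a ``geometrized'' form of Besag and Clifford's Theorem~\ref{t.GCserial}; the constant $2$ disappears precisely because no such constant appears in Theorem~\ref{t.GCserial}. Write $\tilde p_\ep(\sigma)$ for the probability that $\sigma$ is an $\ep$-outlier along a single trajectory $\sigma=X_0,\dots,X_K$ whose length $K$ is drawn from the geometric distribution; this plays the role of $p^k_{0,\ep}(\sigma_0)$ in the proof of Theorem~\ref{t.outlier}, and Definition~\ref{d.outlier} (in the geometric-$k$ sense of the statement) reads: $\sigma_0$ is an $(\ep,\alpha)$-outlier iff $\tilde p_\ep(\sigma_0)$ lies in the largest $\alpha$ fraction, weighted by $\pi$, of the values $\{\tilde p_\ep(\sigma)\}_{\sigma\in\cM}$. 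Since the trajectories $\cT^1,\dots,\cT^m$ are independent and each makes $\sigma_0$ an $\ep$-outlier with probability exactly $\tilde p_\ep(\sigma_0)$, the variable $\rho$ is $\Bin(m,\tilde p_\ep(\sigma_0))$; hence it suffices to prove the deterministic estimate $\tilde p_\ep(\sigma_0)\le\sqrt{\ep/\alpha}$ whenever $\sigma_0$ is not an $(\ep,\alpha)$-outlier, and then feed $p^\ast:=\sqrt{\ep/\alpha}$ into the same one-sided Chernoff/Bernstein bound $\Pr(\Bin(m,p)\ge mp+r)\le e^{-\min(r^2/(3mp),\,r/3)}$ used for Theorem~\ref{t.outlier}, invoking monotonicity of the binomial tail in $p$ and the identity $r^2/(3mp^\ast)=r^2\sqrt{\alpha/\ep}/(3m)$ to obtain \eqref{l.GCrhobound}.

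For the deterministic estimate, the first step is the geometrized Besag--Clifford bound: if $\sigma_0\sim\pi$ and $Y_0=Z_0=\sigma_0$ begin two independent trajectories of independent geometric lengths $K_1,K_2$, then
\[
\Pr\!\big(\om(\s_0)\text{ is an $\ep$-outlier among }\om(\s_0),\om(Y_1),\dots,\om(Y_{K_1}),\om(Z_1),\dots,\om(Z_{K_2})\big)\ \le\ \ep .
\]
To prove this, condition on $n:=K_1+K_2$: a one-line computation shows that two independent geometrics, conditioned on their sum, split uniformly, so $K_1$ is uniform on $\{0,\dots,n\}$ given $\{K_1+K_2=n\}$. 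Given this event the displayed list is distributed exactly as the list in Theorem~\ref{t.GCserial} with parameter $n$ and $\xi=K_1$, so that theorem bounds the conditional probability by $\ep$; averaging over $n$ gives the display.

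The second step is an elementary counting fact: if $\sigma_0$ is simultaneously an $\ep$-outlier along $Y_0,\dots,Y_{K_1}$ and along $Z_0,\dots,Z_{K_2}$, then it is an $\ep$-outlier along the concatenated list. Indeed, with $c_Y=\#\{1\le i\le K_1:\om(Y_i)\le\om(\s_0)\}$ and $c_Z$ defined analogously, the two hypotheses read $1+c_Y\le\ep(K_1+1)$ and $1+c_Z\le\ep(K_2+1)$, and the number of entries of the concatenation that are $\le\om(\s_0)$ equals $1+c_Y+c_Z\le\ep(K_1+K_2+2)-1\le\ep(K_1+K_2+1)$, using $\ep\le1$. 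As $(K_1,Y)$ and $(K_2,Z)$ are independent, this gives $\tilde p_\ep(\sigma_0)^2\le\Pr(\s_0\text{ is an $\ep$-outlier on the concatenation})$; taking $\sigma_0\sim\pi$ and combining with the first step yields $\expect_{\sigma\sim\pi}[\tilde p_\ep(\sigma)^2]\le\ep$. Markov's inequality then closes the argument: if $\sigma_0$ is not an $(\ep,\alpha)$-outlier, then $\Pr_{\sigma\sim\pi}(\tilde p_\ep(\sigma)\ge\tilde p_\ep(\sigma_0))\ge\alpha$, so $\ep\ge\expect_{\sigma\sim\pi}[\tilde p_\ep(\sigma)^2]\ge\alpha\,\tilde p_\ep(\sigma_0)^2$, i.e.\ $\tilde p_\ep(\sigma_0)\le\sqrt{\ep/\alpha}$.

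The only genuinely new ingredient is the conditioning identity of the first step that lets Theorem~\ref{t.GCserial} act on independent geometric path lengths; the merging of two outlier events, Markov's inequality, and the Chernoff tail are routine and parallel the proof of Theorem~\ref{t.outlier}. The sole bookkeeping nuisance I anticipate is the exact reading of ``largest $\alpha$ fraction, weighted by $\pi$'' in Definition~\ref{d.outlier} when a positive-$\pi$-mass set of states shares the value $\tilde p_\ep(\sigma_0)$; this only affects whether the relevant inequality in the Markov step is weak or strict, and not the conclusion.
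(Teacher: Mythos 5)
Your proposal is correct and follows essentially the same route as the paper's proof: reduce to the pointwise claim $\tilde p_\ep(\sigma_0)\le\sqrt{\ep/\alpha}$ via averaging over $\sigma\sim\pi$ and the non-outlier hypothesis, bound $\tilde p_\ep(\sigma)^2$ by the probability of being an outlier on the concatenation of two independent geometric-length trajectories, use the fact that independent geometrics split uniformly given their sum to invoke the Besag--Clifford argument, and finish with the Chernoff tail. The only addition is your explicit verification that being an $\ep$-outlier on both halves implies being an $\ep$-outlier on the concatenation (using $\ep\le 1$), a containment the paper uses without comment.
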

Again, there is an analogous version to Theorem \ref{t.outall}, where $2\ep$ is replaced by $\ep$.

\bigskip

In their paper, Besag and Clifford also describe a parallel test, which we will discuss in Section \ref{s.starsplit}.   In particular, in Section \ref{s.starsplit} we will describe a test which generalizes Besag and Clifford's serial and parallel tests in a way which could be useful in certain parallel regimes.

Finally, we consider an interesting case in the analysis of districtings that arises when the districting problem can be decomposed into several non-interacting districting problems; for example, for the districting for the state Senate of North Carolina, the state is divided into 29 ``county clusters'', each corresponding to a prescribed number of districts based on their populations, so that a districting of the whole state is obtained by non-interacting districting processes in these different county clusters.  In this case, the probability space of random districtings is really a product space, and this structure can be exploited in a strong way for the statistical tests developed in this manuscript.  We develop results for this setting in Section \ref{s.product}.

\section{Multiple hypothesis considerations}
\label{s.multiple}
When applying Theorem \ref{t.outlier} directly, one cannot simply run $m$ trajectories, observe the list $\ep_1,\ep_2,\dots,\ep_m$ where each $\ep_i$ is the minimum $\ep_i$ for which $\sigma_0$ is an $\ep_i$-outlier on $\cT^i$, and then, post-hoc, freely choose the parameters $\alpha$ and $\ep$ in Theorem \ref{t.outlier} to achieve some desired trade-off between $\alpha$ and the significance $p$.

The problem, of course, is that in this case one is testing multiple hypotheses (infinitely many in fact; one for each possible pair $\ep$ and $\alpha$) which would require a multiple hypothesis correction.

One way to avoid this problem is to essentially do a form of cross validation, were a few trajectories are run for the purposes of selecting suitable $\ep$ and $\alpha$, and then discarded from the set of trajectories from which we obtain significance.

A simpler approach, however, is to simply set the parameter $\ep=\ep_{(t)}$ as the $t$th-smallest element of the list $\ep_1,\dots,\ep_m$ for some fixed value $t$.  The case $t=m$, for example, corresponds to taking $\ep$ as the maximum value, leading to the application of Theorem \ref{t.outall}.

The reasons this avoids the need for a multiple hypothesis correction is that we can order our hypothesis events by containment.  In particular, when we apply this test with some value of $t$, we will always have $\rho=t$.  Thus the significance obtained will depend just on the parameter $\ep_{(t)}$ returned by taking the $t$-th smallest $\ep_i$ and on our choice of $\alpha$ (as opposed to say, the particular values of the other $\ep_i$'s which are not the $t$-th smallest).  In particular, regardless of how we wish to trade-off the values of $\alpha$ and $p$ we can assert from our test, our optimum choice of $\alpha$ (for our fixed choice of $t$) will depend just on the value $\ep_{(t)}$.  In particular, we can view $\alpha$ as a function $\alpha(\ep_{(t)})$, so that we when applying Theorem \ref{t.outlier} with with $\ep=\ep_{(t)}$, we are evaluating the single-parameter infinite family of hypotheses $H_{\ep_{(t)},\alpha(\ep_{(t)})}$, and we do not require multiple hypothesis correction since the hypotheses are nested; i.e., since
\begin{equation}\label{l.nested}
\ep_{(t)}\leq \ep'_{(t)}\implies H_{\ep_{(t)},\alpha(\ep_{(t)})} \subseteq H_{\ep'_{(t)},\alpha(\ep_{(t)})}.
\end{equation}
Indeed, \eqref{l.nested} implies that
\[
\Pr\brac{\bigcup_{\ep_{(t)}\leq \beta} H_{\ep_{(t)},\alpha(\ep_{(t)})}}=\Pr(H_{\beta,\alpha(\beta)}),
\]
which ensures that when applying Theorem \ref{t.outlier} in this scenario, the probability of returning a $p$-value $\leq p_0$ for any fixed value $p_0$ will indeed be at most $p_0$.

\section{Proof background}
\label{s.back}

We begin this section by giving the proof of Theorem \ref{t.twopaths}.  In doing so we will introduce some notation that will be useful throughout the rest of this note.  To make things as accessible as possible, we give every detail of the proof.

In this manuscript, a Markov Chain $\cM$ on $\Sigma$ is specified by the transition probabilities $\{\pi_{\s_1,\s_2}\mid \s_1,\s_2\in \Sigma\}$ of a chain.  A \emph{trajectory} of $\cM$ is a sequence of random variables $X_0,X_1,\dots$ required to have the property that for each $i$ and $\s_0,\dots,\s_i$, we have
\begin{equation}\label{pij}
  \Pr\brac{X_i=\s_i\mid X_{i-1}=\s_{i-1},X_{i-2}=\s_{i-2}\dots,X_{0}=\s_0}\\
 =\pi_{\s_i,\s_{i-1}}.
\end{equation}
In particular, the Markov property of the trajectory is that the conditioning on $X_{i-2},X_{i-3},\dots$ is irrelevant once we condition on the value of $X_{i-1}$.  Recall that $\pi$ is a stationary distribution if $X_0\sim \pi$ implies that $X_1\sim \pi$ and thus also that $X_i\sim \pi$ for all $i\geq 0$; in this case we that the trajectory $X_0,X_1,\dots$ is \emph{$\p$-stationary}.  The Markov Chain $\cM$ is \emph{reversible} if any $\pi$-stationary trajectory $X_0,\dots,X_k$ is equivalent in distribution to its reverse $X_k,\dots,X_0$.

We say that $a_j$ is \emph{$\ell$-small} among $a_0,\dots,a_s$ if there are at most $\ell$ indices $i\neq j$ among $0,\dots,s$ such that $a_i\leq a_j$.  The following simple definition is at the heart of the proofs of Theorems \ref{t.rootep}, \ref{t.twopaths}, \ref{t.GCserial}.
\begin{definition}\label{d.rhos}
  Given a Markov Chain $\cM$ with labels $\omega:\Sigma\to \Re$ and stationary distribution $\pi$, we define for each $\ell,j\leq k$ a real number $\r_{j,\ell}^k$, which is the probability that for a $\pi$-stationary trajectory $X_0,X_1,\dots,X_k$, we have that $\omega(X_j)$ is $\ell$-small among $\om(X_0),\dots,\om(X_k)$.
\end{definition}
Observe that \eqref{pij} implies that all $\pi$-stationary trajectories of a fixed length are all identical in distribution, and in particular, that the $\r_{j,\ell}^k$'s are well-defined.

Next observe that if the sequence of random variables $X_0,X_1,\dots$ is a $\pi$-stationary trajectory for $\cM$, then so is any interval of it.  For example,
\[
(X_{k-j},\dots,X_k,\dots,X_{2k-j})
\]
is another stationary trajectory, and thus the probability that $\omega(X_k)$ is $\ell$-small among $\omega(X_{k-j}),\dots,\omega(X_{2k-j})$ is equal to $\rho_{j,\ell}^k$.
In particular, since
\[
(\om(X_{k})\text{ is $\ell$-small among }\om(X_{k-j}),\dots,\om(X_{2k-j}))
\]
follows from
\[
(\om(X_{k})\text{ is $\ell$-small among }\om(X_0),\dots,\om(X_{2k}))
\]
for all $j=0,\dots,k$, we have that
\begin{equation}\label{l.compare}
 \r_{k,\ell}^{2k}\leq \r_{j,\ell}^k.
\end{equation}
We also have that $\sum\limits_{j=0}^k \r_{j,\ell}^k\leq \ell+1$.  Indeed, by linearity of expectation, this sum is the expected number of indices $j\in 0,\dots,k$ such that $\om(X_j)$ is $\ell$-small among $\om(X_0),\dots,\om(X_k)$.  Thus, averaging the left and right sides of \eqref{l.compare} over $j$ from $0$ to $k$, we obtain
\begin{equation}\label{l.implies}
\r_{k,\ell}^{2k}\leq \frac{\ell+1}{k+1}<2\cdot \frac{\ell+1}{2k+1}.
\end{equation}
Line \eqref{l.implies} already gives the theorem, once we make the following trivial observation:
\begin{observation}\label{o.stat}
  Under the hypotheses of Theorem \ref{t.twopaths}, we have that
  \[
  Y_k,Y_{k-1},\dots,Y_1,\s_0,Z_1,Z_2,\dots,Z_k
  \]
  is a $\pi$-stationary trajectory.
\end{observation}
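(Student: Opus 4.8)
The plan is to recognize the displayed sequence as nothing more than a $\pi$-stationary trajectory of $\cM$ of length $2k$, read outward from its midpoint. First I would record the two inputs we are handed: since $\sigma_0\sim\pi$ and $Y_0=Z_0=\sigma_0$, each of $(Y_0,\dots,Y_k)$ and $(Z_0,\dots,Z_k)$ is a $\pi$-stationary trajectory; and by construction these two trajectories are conditionally independent given the value of $\sigma_0$. These are the only facts about $Y$ and $Z$ that we will use, besides reversibility of $\cM$.

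The core of the argument is a decomposition of a generic stationary trajectory. Let $(X_0,\dots,X_{2k})$ be a $\pi$-stationary trajectory of $\cM$, and condition on $X_k=s$ for an arbitrary state $s$. Three observations: (i) by the Markov property \eqref{pij}, given $X_k=s$ the ``reversed past'' $(X_{k-1},\dots,X_0)$ and the ``future'' $(X_{k+1},\dots,X_{2k})$ are conditionally independent; (ii) directly from \eqref{pij}, the future $(X_{k+1},\dots,X_{2k})$ conditioned on $X_k=s$ has exactly the law of $(Z_1,\dots,Z_k)$ conditioned on $Z_0=s$; and (iii) applying reversibility to the length-$k$ sub-trajectory $(X_0,\dots,X_k)$, the reversed past $(X_{k-1},\dots,X_0)$ conditioned on $X_k=s$ has the same law as $(X_1,\dots,X_k)$ conditioned on $X_0=s$, which is the law of $(Y_1,\dots,Y_k)$ conditioned on $Y_0=s$. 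Combining (i)--(iii) and then averaging over $s$ against $\pi$ (the law of $X_k=\sigma_0$), we conclude that $(X_0,\dots,X_{2k})$ has exactly the law of $(Y_k,Y_{k-1},\dots,Y_1,\sigma_0,Z_1,\dots,Z_k)$. Since the left-hand side is a $\pi$-stationary trajectory, so is the right-hand side, which is the claim.

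Alternatively, and equivalently, one can verify the defining property of a trajectory directly for the concatenated sequence: its initial coordinate $Y_k$ is $\pi$-distributed because $(Y_0,\dots,Y_k)$ is $\pi$-stationary; the one-step transition probabilities among the reversed $Y$-coordinates are correct by reversibility of $\cM$; and the transition at $\sigma_0$ together with those among the $Z$-coordinates come from the Markov property of $Z$ together with the conditional independence of $Y$ and $Z$ given $\sigma_0$, which is exactly what licenses dropping the conditioning on the $Y$-coordinates. The only step needing care is this bookkeeping ``at the seam'': one must check that, at every index of the concatenated sequence, conditioning on its entire observed past collapses to conditioning on the single immediately preceding state -- which inside the $Y$-arm is reversibility and, upon crossing into and moving along the $Z$-arm, is the Markov property of $Z$ combined with $Y$ being independent of $Z$ given $\sigma_0$. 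I do not expect a genuine obstacle here, only a routine conditioning computation that must be written out cleanly.
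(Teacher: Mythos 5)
Your proposal is correct and uses essentially the same ingredients as the paper's proof: reversibility to identify the reversed $Y$-arm with the first half of a stationary trajectory, stationarity for the $Z$-arm, and the conditional independence of the two arms given $\sigma_0$ to paste them together (the paper does the pasting by induction on $j$; you do it by matching conditional laws at the midpoint, which is the same computation read in the other direction). No gap.
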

This is an elementary consequence of the definitions, but since we will generalize this statement in Section \ref{s.starsplit}, we give all the details here:
\begin{proof}[Proof of Observation \ref{o.stat}]
Our hypothesis is that $Y_1,Y_2,\dots,Y_k$ and $Z_1,Z_2,\dots,Z_k$ are independent trajectories from a common state $Y_0=Z_0=\sigma_0$ chosen from the stationary distribution $\pi$.  Stationarity implies that
\[
(Z_0,Z_1,\dots,Z_k)\sim(X_k,X_{k+1},\dots,X_{2k}).
\]
Similarly, stationarity and reversibility imply that
\[
(Y_k,Y_{k-1},\dots,Y_0)\sim(X_0,X_1,\dots,X_k).
\]
Finally, our assumption that $Y_1,Y_2,\dots$ and $Z_1,Z_2,\dots$ are independent trajectories from $\sigma_0$ is equivalent to the condition that, for any $s_0,y_1,z_1,y_2,z_2,\dots, y_k,z_k\in \Sigma$, we have for all $j\geq 0$ that
\begin{multline}
  \Pr\brac{Z_j=z_j\mid Z_{j-1}=z_{j-1},\dots,Z_1=z_1,Z_0=Y_0=s_0,Y_1=y_1,\dots,Y_k=y_k}\\=
  \Pr\brac{Z_j=z_j\mid Z_{j-1}=z_{j-1},\dots,Z_1=z_1,Z_0=s_0}  
\end{multline}
Of course, since $\cM$ is a Markov Chain, this second probability is simply
\[
\Pr(Z_j=z_j\mid Z_{j-1}=z_{j-1})=\Pr(X_{k+j}=z_j\mid X_{k+j-1}=z_{j-1}).
\]
In particular, by induction on $j\geq 1$,
\[
(Y_k,Y_{k-1},\dots,Y_0=Z_0,Z_1,\dots,Z_j)\sim(X_0,X_{1},\dots,X_k,X_{k+1},\dots,X_{k+j}),
\]
and in particular
\begin{equation}\label{l.paste}
  (Y_k,\dots,\sigma_0,\dots,Z_k)\sim (X_0,\dots,X_k,\dots,X_{2k}).
\end{equation}
\end{proof}

Pared down to its bare minimum, this proof of Theorem \ref{t.twopaths} works by using that
%
$\r^{2k}_{k,\ell}$ is a lower bound on each $\r^k_{j,\ell}$, and then applying the simple inequality
\begin{equation}\label{e.sum}
\sum_{j=0}^k {\r^k_{j,\ell}}\leq \ell+1.
\end{equation}

The proof of Theorem \ref{t.GCserial} of Besag and Clifford is in some sense even simpler, using only \eqref{e.sum}, despite the fact that Theorem \ref{t.GCserial} has better dependence on $\ep$ (on the other hand, it is not directly applicable to $(\ep,\a)$-outliers in the way that we will use Theorem \ref{t.twopaths}).  Recall from Definition \ref{d.rhos} that the $\rho_{j,\ell}^k$'s are fixed real numbers associated to a stationary Markov Chain.  If $\ell,k$ are fixed and $\xi$ is chosen randomly from $0$ to $k$, then the resulting $\rho_{\xi,\ell}^k$ is a random variable uniformly distributed on the set of real numbers $\{\rho_{0,\ell}^k,\rho_{1,\ell}^k,\dots,\rho_{k,\ell}^k\}.$  In particular, Theorem \ref{t.GCserial} is proved by writing that the probability that $\om(\s_0)$ is $\ell$-small among $\om(\s_0),\om(Y_1),\dots,\om(Y_\xi),\om(Z_1),\dots,\om(Z_{k-\xi})$ is given by
\[
\frac 1{k+1}\brac{\rho_{0,\ell}^k+\rho_{1,\ell}^k+\dots+\rho_{k,\ell}^k}\leq \frac{\ell+1}{k+1},
\]
where the inequality is from \eqref{e.sum}.  Note that we are using an analog of Observation \ref{o.stat} to know that for any $j$, $Y_j,\dots,Y_1,\s_0,Z_1,Z_{k-j}$ is a $\pi$-stationary trajectory.

\section{Global significance for local outliers}
\label{s.outlier}

We now prove Theorem \ref{t.outlier} from Theorem \ref{t.twopaths}.

\begin{proof}[Proof of Theorem \ref{t.outlier}]
  For a $\pi$-stationary trajectory $X_0,\cdots,X_k$, let us define $p_{j,\ep}^k(\sigma)$ to be the probability that $\om(X_j)$ is in the bottom $\ep$ fraction of the values $\om(X_0),\dots,\om(X_k)$, \emph{conditioned} on the event that $X_j=\sigma$.

  In particular, to prove Theorem \ref{t.outlier}, we will prove the following claim:\\
  \textbf{Claim:} If $\sigma_0$ is not an $(\ep,\a)$-outlier, then
  \begin{equation}\label{l.claim}
  p_{0,\ep}^{k}(\sigma_0)\leq \sqrt{\frac{2\ep}{\alpha}}.
  \end{equation}
  Let us first see why the claim implies the theorem.  Recall the random variable $\rho$ is the number of trajectories $\cT^{i}$ from $\sigma_0$ on which $\sigma_0$ is observed to be an $\ep$-outlier with respect to the labeling $\omega$.  The random variable $\rho$ is thus a sum of $m$ independent Bernoulli random variables, which each take value 1 with probability $\leq \sqrt{\frac{2\ep}{\alpha}}$ by the claim.  In particular, by Chernoff's bound, we have
  \begin{equation}\label{l.chernoff}
    \Pr\brac{\rho\geq (1+\d) m\sqrt{\tfrac{2\ep}{\alpha}}}\leq e^{-\min(\d,\d^2) m\sqrt{\frac{2\ep}{\alpha}}/3},
  \end{equation}
 giving the theorem.  (Note the key point of the claim is that $\alpha$ is \emph{inside} the square root in \eqref{l.claim}, while a straightforward application of of Theorem \ref{t.rootep} would give an expression with $\alpha$ outside the square root.)
  
To prove \eqref{l.claim}, consider a $\pi$-stationary trajectory $X_0,\dots,X_k,\dots,X_{2k}$ and condition on the event that $X_k=\sigma$ for some arbitrary $\sigma\in \Sigma$.  Since $\cM$ is reversible, we can view this trajectory as two independent trajectories $X_{k+1},\dots,X_{2k}$ and $X_{k-1},X_{k-2},\dots,X_0$ both beginning from $\sigma$.  In particular, letting $A$ and $B$ be the events that $\om(X_k)$ is an $\ep$-outlier among the lists $\om(X_0),\dots,\om(X_k)$ and $\om(X_k),\dots,\om(X_{2k})$, respectively, we have that
  \begin{equation}
    p_{0,\ep}^k(\sigma)^2=\Pr(A\cap B)\leq p_{k,\ep}^{2k}(\sigma).
    \label{l.comparison}
  \end{equation}
  Now, the assumption that the given $\sigma_0\in \Sigma$ is not an $(\ep,\alpha)$-outlier gives that for a random $\sigma\sim \pi$, we have that
  \begin{equation}
    \label{l.notoutlier}
  \Pr\brac{p_{0,\ep}^k(\sigma)\geq p_{0,\ep}^k(\sigma_0)}\geq \alpha.
  \end{equation}

Line \ref{l.comparison} gives that $p_{0,\ep}^k(\sigma)^2\leq p_{k,\ep}^{2k}(\sigma)$, and Theorem \ref{t.twopaths} gives that $p_{k,\ep}^{2k}\leq 2\ep$.  Thus taking expectations with respect to a random $\sigma\sim \pi$, we obtain that 
  \[
  \E_{\sigma\sim \pi} \brac{p_{0,\ep}^k(\sigma)^2}\leq \E_{\sigma\sim \pi} \brac{p_{k,\ep}^{2k}(\sigma)}=p_{k,\ep}^{2k}\leq 2\ep.
  \]
  On the other hand, we can use \eqref{l.notoutlier} to write
  \[
  \E_{\sigma\sim \pi} \brac{p_{0,\ep}^k(\sigma)^2}\geq \alpha \cdot p_{0,\ep}^k(\sigma_0)^2,
  \]
  so that we have
  \[
p_{0,\ep}^k(\sigma_0)^2\leq \frac{2\ep}{\alpha}.
  \]
\end{proof}

The proof of Theorem \ref{t.GCoutlier} is quite similar:
\begin{proof}[Proof of Theorem \ref{t.GCoutlier}]
  For a $\pi$-stationary trajectory $X_0,\cdots,X_k$ and a real number $\mu$, let us define $p_{0,\ep}^\mu(\sigma)$ to be the probability that $\om(X_j)$ is in the bottom $\ep$ fraction of the values $\om(X_0),\dots,\om(X_k)$, \emph{conditioned} on the event that $X_0=\sigma$, where the length $k$ is chosen from a geometric distribution with mean $\mu$ supported on 0,1,2,\dots; i.e., $k=t$ with probability $\tfrac{1}{\mu+1}(1-\tfrac{1}{\mu+1})^t$.
  
  To prove Theorem \ref{t.GCoutlier}, it suffices to prove that if $\sigma_0$ is not an $(\ep,\a)$-outlier with respect to $k$ drawn from the geometric distribution with mean $\mu$, then
  \begin{equation}\label{l.GCclaim}
  p_{0,\ep}^\mu(\sigma_0)\leq \sqrt{\frac{\ep}{\alpha}}.
  \end{equation}

  To prove \eqref{l.GCclaim}, suppose that $k_1$ and $k_2$ are independent random variables which are are geometrically distributed with mean $\mu$, and consider a $\pi$-stationary trajectory
  \[
  X_0,\dots,X_{k_1},\dots,X_{k_1+k_2}
  \]
  of random length $k_1+k_2$, and condition on the event that $X_{k_1}=\sigma$ for some arbitrary $\sigma\in \Sigma$.  Since $\cM$ is reversible, we can view this trajectory as two independent trajectories $X_{k_1}, X_{k_1+1},\dots,X_{k_1+k_2}$ and $X_{k_1}, X_{k_1-1},X_{k_1-2},\dots,X_0$ both beginning from $X_{k_1}=\sigma$, of random lengths $k_2$ and $k_1$, respectively.  In particular, letting $A$ and $B$ be the events that $\om(X_{k_1})$ is an $\ep$-outlier among the lists $\om(X_0),\dots,\om(X_{k_1})$ and $\om(X_{k_1}),\dots,\om(X_{k_1+k_2})$, respectively, we have that
  \begin{multline}
    p_{0,\ep}^\mu(\sigma)^2=\Pr(A\cap B)
    \\\leq \Pr\big(\om(X_{k_1})\text{ is an $\ep$-outlier among }\om(X_0),\dots,\om(X_{k_1+k_2})\mid X_{k_1}=\sigma\big)
    \label{l.GCcomparison}
  \end{multline}
where, in this last expression, $k_1$ and $k_2$ are random variables.  Now, the assumption that the given $\sigma_0\in \Sigma$ is not an $(\ep,\alpha)$-outlier gives that for a random $\sigma\sim \pi$, we have that
  \begin{equation}
    \label{l.GCnotoutlier}
  \Pr\brac{p_{0,\ep}^\mu(\sigma)\geq p_{0,\ep}^\mu(\sigma_0)}\geq \alpha.
  \end{equation}
  Thus we write
  \begin{multline}\label{l.almostdone}
    \alpha \cdot p_{0,\ep}^\mu(\sigma_0)^2 \leq \E_{\sigma\sim \pi}\brac{p_{0,\ep}^\mu(\sigma)^2}\\\leq
    \Pr\big(\om(X_{k_1})\text{ is an $\ep$-outlier among }\om(X_0),\dots,\om(X_{k_1+k_2})\big),
  \end{multline}
  where the last inequality follows from line \eqref{l.GCcomparison}.

  On the other hand, considering the righthand side of Line \eqref{l.almostdone}, we have that conditioning on any value for the length $\ell=k_1+k_2$ of the trajectory, $k_1$ is uniformly distributed in the range $\{0,\dots,\ell\}.$  This is ensured by the geometric distribution, simply because for any $\ell$ and any $x\in \brac{0,\dots,\ell}$, we have that the probability
  \[
  \Pr\big (k_1=x\text{ AND } k_2=\ell-x\big )=\tfrac{1}{\mu+1}(1-\tfrac 1 {\mu+1})^{x}\tfrac{1}{\mu+1}(1-\tfrac 1 {\mu+1})^{\ell-x}=\brac{\tfrac {1}{\mu+1}}^2\brac{1-\tfrac 1 {\mu+1}}^\ell
  \]
  is independent of $x$.  In particular, conditioning on any particular value for the length $\ell=k_1+k_2$, we have that the probability that $\omega(X_{k_1})$ is an $\ep$-outlier on the trajectory is at most $\ep$, since $X_{k_1}$ is a uniformly randomly chosen element of the trajectory $X_0,\dots,X_{k_1+k_2}$; note that this part of the proof is exactly the same as the proof of Theorem \ref{t.GCserial}.  In particular, for the righthand-side of line \eqref{l.almostdone}, we are writing
  \begin{multline}
    \alpha\cdot p_{0,\ep}^\mu(\sigma_0)^2
    \leq \Pr\big(\om(X_{k_1})\text{ is an $\ep$-outlier among }\om(X_0),\dots,\om(X_{k_1+k_2})\big)\\\leq
    \max_{\ell} \Pr\big(\om(X_{k_1})\text{ is an $\ep$-outlier among }\om(X_0),\dots,\om(X_{k_1+k_2})\big\vert k_1+k_2=\ell\big)\\\leq \ep.
    \label{l.GCep}
  \end{multline}
  This gives line \eqref{l.GCclaim} and completes the proof.
  \end{proof}

We close this section by noting that in implementations where $m$ is not enormous, it may be sensible to use the exact binomial tail in place of the Chernoff bound in \eqref{l.chernoff}.  In particular, this gives the following versions:
\begin{theorem}\label{t.exact}    With $\rho$ as in Theorem \ref{t.outlier}, we have that if $\s_0$ is not an $(\ep,\alpha)$ outlier, then
  \begin{equation}
  \Pr\brac{\rho\geq K}\leq \sum_{k=K}^m \binom{m}{k}\bfrac{2\ep}{\alpha}^{k/2}\brac{1-\sqrt{\frac{2\ep}{\alpha}}}^{m-k}.
  \end{equation}
\end{theorem}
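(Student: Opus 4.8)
The plan is to replace the Chernoff estimate \eqref{l.chernoff} in the proof of Theorem~\ref{t.outlier} by the exact binomial tail, using only the monotonicity of binomial upper tails in the success probability; essentially all of the work has already been done in that proof.  Recall that the substance of the proof of Theorem~\ref{t.outlier} is the key claim \eqref{l.claim}: if $\sigma_0$ is not an $(\ep,\a)$-outlier, then $p_{0,\ep}^{k}(\sigma_0)\leq\sqrt{2\ep/\a}$.  This is informative only when $2\ep<\a$ (and the asserted bound is otherwise vacuous), so I would assume $2\ep<\a$ and set $q:=p_{0,\ep}^{k}(\sigma_0)$ and $q^{*}:=\sqrt{2\ep/\a}$, so that $0\leq q\leq q^{*}<1$.

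First I would recall, exactly as in the proof of Theorem~\ref{t.outlier}, that $\rho$ is a sum of $m$ independent Bernoulli random variables, each equal to $1$ with probability precisely $q$, since the trajectories $\cT^{1},\dots,\cT^{m}$ are independent and all start from $\sigma_0$; hence $\rho$ has the distribution $\Bin(m,q)$.  Then I would invoke the elementary fact that for $0\leq q\leq q^{*}\leq 1$ and any integer $K$ one has $\Pr(\Bin(m,q)\geq K)\leq\Pr(\Bin(m,q^{*})\geq K)$.  This follows from a one-line coupling: with independent uniform random variables $U_1,\dots,U_m$ on $[0,1]$, set $\rho=\sum_{i=1}^{m}\mathbf 1[U_i\leq q]$ and $\rho^{*}=\sum_{i=1}^{m}\mathbf 1[U_i\leq q^{*}]$; then $\rho\leq\rho^{*}$ pointwise, so $\{\rho\geq K\}\subseteq\{\rho^{*}\geq K\}$.

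Putting these two facts together yields
\[
\Pr\brac{\rho\geq K}\leq\Pr\brac{\Bin(m,q^{*})\geq K}=\sum_{k=K}^{m}\binom{m}{k}(q^{*})^{k}(1-q^{*})^{m-k},
\]
which is the claimed bound once $q^{*}=\sqrt{2\ep/\a}$ is substituted; note that taking $K=m$ recovers Theorem~\ref{t.outall}.  I do not anticipate any real obstacle: the only step that needs attention is the stochastic monotonicity of the binomial upper tail in $q$, which is entirely standard, together with the trivial edge case $2\ep\geq\a$.  All of the genuine Markov-chain content is already contained in the claim \eqref{l.claim} from the proof of Theorem~\ref{t.outlier}.
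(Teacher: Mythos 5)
Your proof is correct and follows essentially the same route as the paper, which obtains Theorem \ref{t.exact} simply by substituting the exact binomial tail for the Chernoff bound in \eqref{l.chernoff} while reusing the key claim \eqref{l.claim} that $p_{0,\ep}^{k}(\sigma_0)\leq\sqrt{2\ep/\alpha}$. The only detail you add beyond what the paper records explicitly is the standard coupling argument for monotonicity of the binomial upper tail in the success probability, which is exactly the step the paper leaves implicit.
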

\begin{theorem}\label{t.randexact}    With $\rho$ as in Theorem \ref{t.GCoutlier}, we have that if $\s_0$ is not an $(\ep,\alpha)$ outlier, then
  \begin{equation}
  \Pr\brac{\rho\geq K}\leq \sum_{k=K}^m \binom{m}{k}\bfrac{\ep}{\alpha}^{k/2}\brac{1-\sqrt{\frac{\ep}{\alpha}}}^{m-k}.
  \end{equation}
\end{theorem}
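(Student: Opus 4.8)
The plan is to deduce Theorem~\ref{t.randexact} from the work already done in the proof of Theorem~\ref{t.GCoutlier}, together with the elementary monotonicity of binomial tails.

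First recall that $\rho=\sum_{i=1}^m B_i$, where $B_i$ is the $\{0,1\}$-valued indicator of the event that $\sigma_0$ is an $\ep$-outlier on the trajectory $\cT^i$. Under the hypotheses of Theorem~\ref{t.GCoutlier} the trajectories $\cT^1,\dots,\cT^m$ are mutually independent, each is a run of $\cM$ from the common start $\sigma_0$, and each has its length drawn independently from one fixed geometric distribution, say with mean $\mu$. Hence the $B_i$ are i.i.d.\ Bernoulli variables, and their common success probability is exactly the quantity $p_{0,\ep}^\mu(\sigma_0)$ defined in the proof of Theorem~\ref{t.GCoutlier}.

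Next I would apply inequality \eqref{l.GCclaim} from that proof: since $\sigma_0$ is not an $(\ep,\alpha)$-outlier with respect to a geometric length, $p_{0,\ep}^\mu(\sigma_0)\le q$, where $q:=\sqrt{\ep/\alpha}$; we assume $q\le 1$, which is the relevant regime ($\ep\le\alpha$) and the one in which the stated bound holds. Thus $\rho$ is a sum of $m$ i.i.d.\ Bernoulli's of parameter at most $q$, and is therefore stochastically dominated by a $\Bin(m,q)$ variable, so that
\[
\Pr(\rho\ge K)\ \le\ \Pr\big(\Bin(m,q)\ge K\big)\ =\ \sum_{k=K}^{m}\binom{m}{k}q^{k}(1-q)^{m-k}.
\]
Substituting $q=\sqrt{\ep/\alpha}$ gives precisely the claimed inequality.

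The only step meriting a line of justification is the stochastic domination, which is the standard coupling: take i.i.d.\ uniforms $U_1,\dots,U_m$ on $[0,1]$, set $B_i=\mathbf 1[U_i\le p_{0,\ep}^\mu(\sigma_0)]$ and $B_i'=\mathbf 1[U_i\le q]$, and note $B_i\le B_i'$ pointwise since $p_{0,\ep}^\mu(\sigma_0)\le q$; summing, $\rho\le\sum_i B_i'\sim\Bin(m,q)$. (Equivalently, one checks directly that $q\mapsto\sum_{k\ge K}\binom{m}{k}q^{k}(1-q)^{m-k}$ is nondecreasing on $[0,1]$.) I do not expect a genuine obstacle here; the points to be careful about are that the Theorem~\ref{t.GCoutlier} setup really yields i.i.d.\ $B_i$ with a single common parameter (this uses both the mutual independence of the runs and the i.i.d.\ choice of their lengths), that \eqref{l.GCclaim} applies verbatim because it is stated for trajectories conditioned to start at a fixed state, and the trivial regime $q\ge 1$. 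Finally, the fixed-length analogue Theorem~\ref{t.exact} is proved in exactly the same way, using instead the Claim \eqref{l.claim} from the proof of Theorem~\ref{t.outlier}, which gives $p_{0,\ep}^{k}(\sigma_0)\le\sqrt{2\ep/\alpha}$ and hence accounts for the $2\ep$ appearing there.
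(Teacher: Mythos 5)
Your proposal is correct and follows exactly the route the paper intends: Theorem \ref{t.randexact} is stated as the result of replacing the Chernoff bound by the exact binomial tail, using the bound $p_{0,\ep}^\mu(\sigma_0)\leq\sqrt{\ep/\alpha}$ from \eqref{l.GCclaim} together with the fact that $\rho$ is a sum of $m$ i.i.d.\ Bernoulli indicators with that common parameter. Your explicit justification of the stochastic-domination step and the remark on the trivial regime $\sqrt{\ep/\alpha}\geq 1$ are fine and simply fill in details the paper leaves implicit.
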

\section{Generalizing the Besag and Clifford tests}
\label{s.starsplit}

Theorem \ref{t.outlier} is attractive because it succeeds at separating statistical significance from effect size, and at demonstrating statistical significance for an intuitively-interpretable deterministic property of state in the Markov Chain.  This is especially important when public-policy decisions must be made by non-experts on the basis of such tests.

In some cases, however, these may not be important goals.  In particular, one may simply desire a statistical test which is as effective as possible at disproving the null hypothesis $\sigma\sim \pi$.  This is a task at which Besag and Clifford's Theorem \ref{t.GCserial} excels.  

In their paper, Besag and Clifford also prove the following result, to enable a test designed to take efficient advantage of parallelism:
\begin{theorem}[Besag and Clifford parallel test]\label{t.GCparallel}
 Fix numbers $k$ and $m$.  Suppose that $\sigma_0$ is chosen from a stationary distribution $\pi$ of the reversible Markov Chain $\cM$, and suppose we sample a trajectory $X_1,X_2,\dots,X_k$ from $X_0=\sigma_0$, and then branch to sample $m-1$ trajectories $Z^s_1,Z^s_2,\dots,Z^s_k$ $(2\leq s\leq m)$ all from the state $Z^s_0=X_k$.   Then we have that
  \[
\Pr\brac{\om(\s_0)\text{\textnormal{ is an $\ep$-outlier among }}\om(\s_0),\om(Z^2_k),\om(Z^3_k),\dots,\om(Z^m_k)}\leq \ep.
  \]
\end{theorem}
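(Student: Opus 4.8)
The plan is to mimic the one-line averaging argument used for the serial test (Theorem~\ref{t.GCserial}), but applied to a star-shaped trajectory instead of a path. First I would set up the right stationary object: starting from $\sigma_0\sim\pi$, run a trajectory $X_0=\sigma_0,X_1,\dots,X_k$, and then from $X_k$ launch $m-1$ further independent trajectories $Z^s_1,\dots,Z^s_k$ ($2\le s\le m$) together with one more trajectory $Z^1_1,\dots,Z^1_k$ which we identify with the reverse of the initial segment (so that $Z^1_k=X_0=\sigma_0$). The key structural claim, an analog of Observation~\ref{o.stat}, is that the whole configuration is symmetric under permuting the $m$ arms: the list $(\om(Z^1_k),\om(Z^2_k),\dots,\om(Z^m_k))$ has an exchangeable joint distribution, because each arm is, conditionally on $X_k$, an independent $k$-step trajectory from $X_k$, and stationarity plus reversibility makes the ``first arm'' (the reversal of $X_0,\dots,X_k$) distributionally identical to each of the branched arms. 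I would prove this by the same induction-on-$j$ computation with conditional transition probabilities that appears in the proof of Observation~\ref{o.stat}, just carried out for $m$ arms rather than $2$.

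Given exchangeability of $(\om(Z^1_k),\dots,\om(Z^m_k))$, the event that $\om(\sigma_0)=\om(Z^1_k)$ is an $\ep$-outlier among these $m$ values is, by symmetry, equally likely to be witnessed by any one of the $m$ arms. Concretely, for each $s$ let $R_s$ be the indicator that $\om(Z^s_k)$ is $\ell$-small among $\om(Z^1_k),\dots,\om(Z^m_k)$, where $\ell=\flr{\ep(m-1)}$ (matching the definition of $\ep$-outlier with a list of length $m$). Exchangeability gives $\Pr(R_1=1)=\Pr(R_s=1)$ for all $s$, while deterministically $\sum_{s=1}^m R_s\le \ell+1$ since at most $\ell+1$ of the $m$ values can be among the $\ell+1$ smallest. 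Taking expectations and using linearity,
\[
m\cdot\Pr(R_1=1)=\sum_{s=1}^m\Pr(R_s=1)=\E\Big[\sum_{s=1}^m R_s\Big]\le \ell+1\le \ep(m-1)+1\le \ep m,
\]
so $\Pr(R_1=1)\le\ep$, which is exactly the claimed bound on the probability that $\om(\sigma_0)$ is an $\ep$-outlier among $\om(\sigma_0),\om(Z^2_k),\dots,\om(Z^m_k)$.

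The main obstacle is the exchangeability claim, i.e.\ verifying that the reversed initial segment really does behave like just another branched arm. This requires care because the first arm is not sampled forward from $X_k$; it is the time-reversal of a stationary trajectory that $X_k$ sits at the end of. The honest argument is the same as in the proof of Observation~\ref{o.stat}: stationarity gives $(Z^s_0,\dots,Z^s_k)\sim(X_k,X_{k+1},\dots,X_{2k})$ for each branched arm, while stationarity and reversibility give $(X_k,X_{k-1},\dots,X_0)\sim(X_k,X_{k+1},\dots,X_{2k})$ as well, and the independence of the arms conditionally on $X_k$ lets one paste these together into a fully exchangeable configuration by induction on the arm length. Once that is in hand, the counting step is immediate and, as in Theorem~\ref{t.GCserial}, the dependence on $\ep$ is optimal because it would be exact if the $m$ labels were i.i.d.\ samples.
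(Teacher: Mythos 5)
Your approach is the same as the paper's: show that $\s_0,Z^2_k,\dots,Z^m_k$ are exchangeable --- conditionally on $X_k$, each of the $m$ arms, including the reversed initial segment (by stationarity plus reversibility), is an independent $k$-step trajectory from $X_k$ --- and then conclude by the averaging/pigeonhole count $\sum_s R_s\le \ell+1$. The exchangeability argument you sketch is exactly the one the paper gives.

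There is, however, an arithmetic slip in your final display. With the paper's definition, $\om(\s_0)$ is an $\ep$-outlier among the $m$ values $\om(\s_0),\om(Z^2_k),\dots,\om(Z^m_k)$ iff the number of indices $i$ with $\om(Z^i_k)\le\om(\s_0)$, \emph{counting $\s_0$ itself}, is at most $\ep m$; translated into $\ell$-smallness this means $\ell+1=\flr{\ep m}$, not $\ell=\flr{\ep(m-1)}$. Your chain $m\Pr(R_1=1)\le\ell+1\le\ep(m-1)+1\le\ep m$ fails at the last inequality, since $\ep(m-1)+1\le\ep m$ is equivalent to $\ep\ge 1$; as written you only obtain $\Pr(R_1=1)\le\ep+(1-\ep)/m$, which is weaker than the theorem. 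With the corrected $\ell$ the same count gives $m\Pr(R_1=1)\le\ell+1=\flr{\ep m}\le\ep m$, hence the claimed bound $\Pr(R_1=1)\le\ep$. Everything else in your argument is sound.
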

\begin{proof}
  For this theorem it suffices to observe that $\sigma_0,Z^2_k,\dots,Z^m_k$ are \emph{exchangable} random variables---that is, all permutations of the sequence $\s_0,Z^2_k,\dots,Z^m_k$ are identical in distribution.  This is because if $\s_0$ is chosen from $\pi$ and then the $Z^i_k$'s are chosen as above, the result is equivalent in distribution to the case where $X_k$ is chosen from $\pi$ and then each $Z^i_k$ is chosen (independently) as the end of a trajectory $X_k,Z^i_{1},\dots,Z^i_k$, and $\s_0=Y_k$ is chosen (independently) as the end of a trajectory $X_k,Y_1,\dots,Y_k$.  Here we are using that reversibility implies that $(X_k,Y_1,\dots,Y_k)$ is identical in distribution to $(\s_0,X_1,\dots,X_k)$.
\end{proof}

With an eye towards finding a common generalization of Besag and Clifford's serial and parallel tests, we define a \emph{Markov outlier test} as a significance test with the following general features:
\begin{itemize}
\item The test begins from a state $\sigma_0$ of the Markov Chain which, under the null hypothesis, is assumed to be stationary;
\item random steps in the Markov chain are sampled from the initial state and/or from subsequent states exposed by the test;
\item the ranking of the initial state's label is compared among the labels of some (possibly all) of the visited states; it is an $\ep$-outlier if it's label is among the bottom $\ep$ of the comparison labels.  Some function $\rho(\ep)$ assigns valid statistical significance to the test results, as in the above theorems.
\end{itemize}
In particular, such a test may consist of single or multiple trajectories, may branch once or multiple times, etc.  In this section, we prove the validity of a parallelizable Markov outlier test with best possible function $\rho(\ep)=\ep$, but for which it is natural to expect the \emph{$\ep$-power} of the test---that is, its tendency to return small values of $\ep$ when $\sigma_0$ truly is an outlier---surpasses that of Theorems \ref{t.GCserial} and \ref{t.GCparallel}.  In particular, we prove the following theorem:
\begin{theorem}[Star-split test]\label{t.starsplit}
Fix numbers $m$ and $k$.  Suppose that $\s_0$ is chosen from a stationary distribution $\pi$ of the reversible Markov Chain $\cM$, and suppose that $\xi$ is chosen randomly in $\{1,\dots,k\}$.  Now sample trajectories $X_1,\dots,X_{\xi}$ and $Y_1,\dots,Y_{k-\xi}$ from $\s_0$, and then branch and sample $m-1$ trajectories $Z^s_1,Z^s_2,\dots,Z^s_k$ $(2\leq s\leq m)$ all from the state $Z^s_0=X_\xi$.   Then we have that
  \begin{align*}
    \Pr\Big(\om(\s_0)\text{\textnormal{ is an $\ep$-outlier among }}\om(\s_0),\,&\om(X_1)\dots,\om(X_{\xi-1}),\\
        &\om(Y_1),\dots,(Y_{k-\xi}),\\
        &\om(Z^2_1),\dots,\om(Z^2_k)\\
        &\vdots\\
        &\om(Z^m_k),\dots,\om(Z^m_k)\Big)\leq \ep.
  \end{align*}
\end{theorem}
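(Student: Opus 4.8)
The plan is to reduce Theorem~\ref{t.starsplit} to two ingredients already implicit in the excerpt---the re-rooting trick behind Observation~\ref{o.stat} and the exchangeability trick behind Theorem~\ref{t.GCparallel}---glued together by one elementary fact about finite lists of reals, which I would record first: for any list $a_1,\dots,a_N$ of real numbers, at most $\ep N$ of its entries are $\ep$-outliers of the list. Indeed, if $a_{j^*}$ is the \emph{largest} entry which is an $\ep$-outlier, then every $\ep$-outlier entry $a_j$ satisfies $a_j\le a_{j^*}$, so the number of such entries is at most $\#\set{i:a_i\le a_{j^*}}\le \ep N$, the last inequality because $a_{j^*}$ is itself an $\ep$-outlier. (This is just the list-level version of the one-line observation behind the proof of Theorem~\ref{t.GCserial}.)

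Next I would re-root the whole configuration at the branch point $X_\xi$. Conditioning on the value of $\xi$, the proof of Observation~\ref{o.stat}---which goes through verbatim for two trajectories of possibly unequal lengths---shows that
\[
X_\xi,\;X_{\xi-1},\;\dots,\;X_1,\;\s_0,\;Y_1,\;\dots,\;Y_{k-\xi}
\]
is a $\pi$-stationary trajectory of length $k$. In particular $X_\xi\sim\pi$, and, conditioned on $X_\xi=v$, the portion $(X_{\xi-1},\dots,X_1,\s_0,Y_1,\dots,Y_{k-\xi})$ is a Markov trajectory of length $k$ started at $v$. Since the $Z^s$ $(2\le s\le m)$ are, by construction, independent length-$k$ trajectories from $X_\xi$ drawn with fresh randomness, conditioning on $X_\xi=v$ leaves them iid length-$k$ trajectories from $v$, independent of the ``$X$--$Y$ leg'' above, while $\xi$ stays uniform in $\set{1,\dots,k}$ and independent of everything else. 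Thus, writing $W^1$ for the $X$--$Y$ leg read outward from $v$ and $W^s:=Z^s$ for $s\ge 2$, after conditioning on the branch point we have: a center $v\sim\pi$; conditioned on $v$, $m$ iid length-$k$ trajectories $W^1,\dots,W^m$ from $v$; an independent uniform $\xi\in\set{1,\dots,k}$ with $\s_0=W^1_\xi$; and the list appearing in the theorem is exactly the list $L$ of the $mk$ values $\om(W^i_j)$, $1\le i\le m$, $1\le j\le k$.

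Finally I would symmetrize and conclude. Since $\xi$ is independent of the $W^i$'s,
\[
\Pr\big(\s_0\text{ is an }\ep\text{-outlier of }L\big)=\frac1k\sum_{j=1}^{k}\Pr\big(\om(W^1_j)\text{ is an }\ep\text{-outlier of }L\big),
\]
and since $W^1,\dots,W^m$ are exchangeable and $L$ is invariant under permuting the legs, each term on the right equals $\frac1m\sum_{i=1}^{m}\Pr\big(\om(W^i_j)\text{ is an }\ep\text{-outlier of }L\big)$. Hence the probability in question is $\tfrac1{mk}$ times the expected number of pairs $(i,j)$ for which $\om(W^i_j)$ is an $\ep$-outlier of $L$; by the list fact this number never exceeds $\ep\lvert L\rvert=\ep mk$, so the probability is at most $\ep$, as claimed.

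I expect the only step needing genuine care to be the re-rooting: one must verify that after conditioning on $X_\xi$ the $X$--$Y$ leg is distributed as a fresh Markov trajectory from $X_\xi$ (this is exactly where reversibility enters, as in Observation~\ref{o.stat}), that it is independent of the $Z^s$ legs, and that the depth $\xi$ of $\s_0$ within it remains uniform and independent. Each of these is an elementary consequence of the construction, but together they are precisely what makes the exchangeability argument of Theorem~\ref{t.GCparallel} applicable in this more elaborate ``star of trajectories'' configuration.
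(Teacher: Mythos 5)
Your proposal is correct and takes essentially the same route as the paper: the paper formalizes the configuration as a $\pi$-stationary $T$-projection for the star-of-paths tree and combines the counting bound \eqref{e.Tsum} with the symmetry of $\rho^{T,S}_{v,\ell}$ across the $m$ legs and the average over the uniform $\xi$, which are exactly your three steps (the list fact, exchangeability of the iid legs after re-rooting at $X_\xi$, and the average over $j$). The only difference is presentational---you argue by direct conditioning and exchangeability rather than through the tree-projection formalism---and I see no gaps.
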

In particular, note that the set of comparison random variables used consists of all random variables exposed by the test \emph{except} $X_{\xi}$.

To compare Theorem \ref{t.starsplit} with Theorems \ref{t.GCparallel} and \ref{t.GCserial}, let us note that it is natural to expect the $\ep$-power of a Markov chain significance test to depend on:
\begin{enumerate}[(a)]
\item How many comparisons are generated by the test, and
\item \label{howfar} how far typical comparison states are from the state being tested, where we measure distance to a comparison state by the number of Markov chain transitions which the test used to generate the comparison.
\end{enumerate}

If unlimited parallelism is available, then the Besag/Clifford parallel test is essentially optimal from these parameters, as it draws an unlimited number of samples, whose distance from the initial state is whatever serial running time is used.  Conversely, in a purely serial setting, the Besag/Clifford test is essentially optimal with respect to these parameters.

But it is natural to expect that even when parallelism is available, the number $n$ of samples we desire will often be be significantly greater than the parallelism factor $\ell$ available.  In this case, the Besag/Clifford parallel test will use $n$ comparisons at distance $d\approx \ell t/n$, where $t$ is the serial time used by the test.  In particular, the typical distance to a comparison can be considerably less than $t$ when $\ell$ compares unfavorably with $n$.

On the other hand, Besag/Clifford serial test generates comparisons whose typical distance is roughly $t/2$, but cannot make use of parallelism beyond $\ell=2$.  For an apples-to-apples comparison, it is natural to consider the case of carrying out their serial test using only every $d$th state encountered as a comparison state for some $d$.  This is equivalent to applying the test to the $d$th-power of the Markov chain, instead of applying it directly.  (In practical applications, this is a sensible choice when comparing the labels of states is expensive relative to the time required to carry out transitions of the chain.)  Now if $\ell$ is a small constant, we see that with $t\cdot d$ steps, the BC parallel test can generate roughly $n$ comparisons all at distance $d$ from the state being tested, the serial test could generate comparisons at distances $d,2d,3d,\dots,kd$ (measured in terms of transitions in $\cM$), where these distances occur with multiplicity at most 2, and $k=\max(\xi,n-\xi)\geq n/2$.  In particular, the serial test generates a similar number of comparisons in this way but at much greater distances from the state we are evaluating, making it more likely that we are able to detect that the input state is an outlier.

Consider now the star-split test.  Again, to facilitate comparison, we suppose the test is being applied to the $d$th power of $\cM$.  If serial time $t\approx sd$ is to be used, then we will branch into $\ell-1$ trajectories after $\xi\cdot$ $\cM^d$ chain, where $\xi$ is randomly chosen from $\{0,\tfrac s 2\}$.  Thus comparisons used lie at a set of distances $d,2d,\dots,(\xi+\tfrac s {2})d$ similar to the case of the Besag/Clifford serial test above.   But  now the distances $d,2d,\dots,(\xi d-1)d$ will have multiplicities at most 2 in the set of comparison distances, while the distances $(\xi+1)d, (\xi+2)d, \dots, (\xi+\tfrac s 2)d$ all have multiplicity at least $\ell-1$.  In particular, the test allows us to make more comparisons to more distance states, essentially by a factor of the parallelism factor being used.  In particular, it is natural to expect performance to improve as $\ell$ increases.  Moreover, the star-split test is equivalent to the Besag/Clifford serial test for $\ell\leq 2$, and essentially equivalent to their parallel test in the large $\ell$ limit.  (To make this latter correspondence exact, once can apply Theorem \ref{t.starsplit} to the $d$th power of a Markov chain $\cM$, and take $k=1$.)

\bigskip

We now turn to the task of proving Theorem \ref{t.starsplit}.  Unlike Theorems \ref{t.rootep}, \ref{t.twopaths}, and \ref{t.GCserial}, the comparison states used in Theorems \ref{t.GCparallel} and \ref{t.starsplit} cannot be viewed as a single trajectory in $\cM$. This motivates the natural generalization of the notion of a $\pi$-stationary trajectory as follows:
\begin{definition}\label{d.statT}   Given a reversible Markov Chain $\cM$ with stationary distribution $\pi$ and an undirected tree $T$, a \emph{$\pi$-stationary $T$-projection} is a collection of random variables $\{X_v\}_{v\in T}$ such that:
  \begin{enumerate}[(i)]
  \item \label{part.pi} for all $v\in T$, $X_v\sim \pi$;
  \item \label{part.edge} for any edge $\{u,v\}$ in $T$, if we let $T_u$ denote the vertex-set of the connected component of $u$ in $T\setminus \{u,v\}$ and $\{\s_w\}_{w\in T}$ is an arbitrary collection of states, then
\[
\Pr\brac{X_v=\s_v\middle| \bigwedge_{w\in T_u} X_w=\s_w}=\pi_{\s_u,\s_v}.
\]
  \end{enumerate}
\end{definition}


In analogy to the case of $\pi$-stationary trajectories, Definition \ref{d.statT} easily gives the following, by induction:
\begin{observation}\label{o.Tunique}
  For fixed $\pi$ and $T$, if $\{X_w\}_{w\in T}$ and $\{Y_w\}_{w\in T}$ are both $\pi$-stationary $T$-projections, then the two collections $\{X_w\}_{w\in T}$ and $\{Y_w\}_{w\in T}$ are equivalent in distribution.\qed
\end{observation}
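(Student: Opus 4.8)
The plan is to show that the joint distribution of a $\pi$-stationary $T$-projection $\{X_w\}_{w\in T}$ is completely determined --- by an explicit product formula --- by $\pi$, the tree $T$, and the transition probabilities of $\cM$. Since the resulting formula makes no reference to the particular projection, Observation \ref{o.Tunique} follows at once: applying it to both $\{X_w\}_{w\in T}$ and $\{Y_w\}_{w\in T}$ shows the two collections are identically distributed.

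First I would fix an arbitrary root $r\in T$ and enumerate the vertices as $v_0=r,v_1,\dots,v_{n-1}$ so that every $v_i$ with $i\geq 1$ has its parent $p(v_i)$ --- the neighbor of $v_i$ on the path to $r$ --- among $\{v_0,\dots,v_{i-1}\}$ (a breadth-first ordering from $r$ does the job). The claim to establish is that
\[
\Pr\Big(\bigwedge_{i=0}^{n-1}X_{v_i}=\s_{v_i}\Big)=\pi(\s_{v_0})\prod_{i=1}^{n-1}\pi_{\s_{p(v_i)},\s_{v_i}}
\]
for every choice of states $\s_{v_0},\dots,\s_{v_{n-1}}$, which I would prove by the chain rule along the ordering. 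The base case is immediate: the marginal condition of Definition \ref{d.statT} gives $\Pr(X_{v_0}=\s_{v_0})=\pi(\s_{v_0})$. For the inductive step, note that since the ordering respects the parent relation, none of $v_0,\dots,v_{i-1}$ lies in the subtree hanging below $v_i$; hence $\{v_0,\dots,v_{i-1}\}$ (which contains $p(v_i)$) is a subset of $T_{p(v_i)}$, the vertex-set of the component of $p(v_i)$ in $T$ after the edge $\{p(v_i),v_i\}$ is removed. The edge condition of Definition \ref{d.statT} says $\Pr\big(X_{v_i}=\s_{v_i}\mid \bigwedge_{w\in T_{p(v_i)}}X_w=\s_w\big)=\pi_{\s_{p(v_i)},\s_{v_i}}$ for \emph{every} assignment of the $\s_w$; in particular this conditional probability does not depend on the coordinates $X_w$ with $w\in T_{p(v_i)}\setminus\{v_0,\dots,v_{i-1}\}$, so averaging over those coordinates while holding $X_{v_0},\dots,X_{v_{i-1}}$ fixed gives
\[
\Pr\big(X_{v_i}=\s_{v_i}\mid X_{v_0}=\s_{v_0},\dots,X_{v_{i-1}}=\s_{v_{i-1}}\big)=\pi_{\s_{p(v_i)},\s_{v_i}}.
\]
Multiplying these conditional probabilities together yields the displayed product formula, whose right-hand side depends only on $\pi$, $T$, and $\cM$, which is what we want.

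The only delicate point --- the main obstacle, though a minor one --- is the averaging step: one must justify replacing the conditioning on the entire component $T_{p(v_i)}$ by conditioning only on the already-exposed prefix $v_0,\dots,v_{i-1}$. This is legitimate precisely because the edge condition asserts that the conditional probability equals the \emph{same} constant $\pi_{\s_{p(v_i)},\s_{v_i}}$ regardless of the remaining coordinates, so the tower property collapses the extra conditioning; one should also remark that assignments producing a conditioning event of probability zero can be discarded without affecting the formula. (One could instead run the argument as an induction on $|T|$, peeling off a leaf $\ell$ and applying the edge condition to the leaf edge $\{u,\ell\}$, since then $T_u = T\setminus\{\ell\}$; but this requires the same averaging remark and does not exhibit the limiting distribution as explicitly.)
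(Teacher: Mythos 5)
Your proof is correct, and it is essentially the argument the paper has in mind: the paper omits the proof entirely (asserting the observation "easily" follows by induction), and your root-and-order chain-rule factorization, with the averaging step justified by the fact that the edge condition gives the \emph{same} constant for every completion of the conditioning event, is the standard way to make that induction precise. The explicit product formula $\pi(\s_r)\prod_i \pi_{\s_{p(v_i)},\s_{v_i}}$ is a nice bonus, as it exhibits the common distribution rather than merely proving uniqueness.
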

This enables the following natural analog of Definition \ref{d.rhos}:
\begin{definition}
  Given a Markov Chain $\cM$ with labels $\omega:\Sigma\to \Re$ and stationary distribution $\pi$, we define for each $\ell$, each undirected tree $T$, each vertex subset $S\subset T$ and each vertex $v\in S$ a real number $\r_{v,\ell}^{T,S}$, which is the probability that for a $\pi$-stationary $T$-projection $\{X_w\}_{w\in T}$, we have that  $\omega(X_v)$ is $\ell$-small among $\{\om(X_w)\}_{w\in S}$.
\end{definition}

Observe that as in \eqref{e.sum} we have for any tree $T$ and any vertex subset $S$ of $T$, we have that
\begin{equation}\label{e.Tsum}
  \sum_{w\in S}\rho^{T,S}_{w,\ell}\leq \ell+1.
\end{equation}

The following Observation, applied recursively, gives the natural analog of Observation \ref{o.stat}.  Again the proof is an easy exercise in the definitions.
\begin{observation}\label{o.rec}
  Suppose that $T$ is an undirected tree, $v$ is a leaf of $T$, $T'=T\setminus v$, and $\{X_w\}_{w\in T'}$ is a $\pi$-stationary $T'$-projection.  Suppose further that $X_v$ is a random variable such that for all $\{\s_w\}_{w\in T}$ we have that
\begin{equation}\label{e.uv}
\Pr\brac{X_v=\s_v\middle| \bigwedge_{w\in T'} \brac{X_w=\s_w}}=\pi_{\s_u,\s_v},
\end{equation}
where $u$ is the neighbor of $v$ in $T$.   Then $\{X_w\}_{w\in T}$ is a $\pi$-stationary $T$-projection.\qed
\end{observation}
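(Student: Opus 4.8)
The plan is to verify directly that the collection $\{X_w\}_{w\in T}$ satisfies the two defining conditions (i) and (ii) of Definition \ref{d.statT}.  I would treat the hypothesis that $\{X_w\}_{w\in T'}$ is a $\pi$-stationary $T'$-projection as known, so the only new ingredient is \eqref{e.uv}, which is precisely condition (ii) for the one new edge $\{u,v\}$ of $T$ relative to the component $T_u=T'$ of $T\setminus\{u,v\}$.  Note that the edge set of $T$ is the edge set of $T'$ together with $\{u,v\}$, so condition (ii) only has to be established for the edges of $T'$ and for $\{u,v\}$.

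The first step is to record the consequence of \eqref{e.uv} that its right-hand side depends on $\{\s_w\}_{w\in T'}$ only through $\s_u$; marginalizing then yields, for every $S\sbs T'$ with $u\in S$,
\[
\Pr\brac{X_v=\s_v \mid \bigwedge_{w\in S}(X_w=\s_w)}=\pi_{\s_u,\s_v},
\]
so that $X_v$ behaves like a fresh step of $\cM$ out of $X_u$, conditionally independent of $\{X_w\}_{w\in T'\setminus u}$ given $X_u$.  Taking $S=\{u\}$ and using $X_u\sim\pi$ together with the stationarity of $\pi$ gives $X_v\sim\pi$, which with the hypothesis on $T'$ establishes condition (i).  Condition (ii) for the edge $\{u,v\}$ is then immediate in one orientation --- relative to $T_u=T'$ it is exactly \eqref{e.uv} --- and in the other it asks that $\Pr(X_u=\s_u\mid X_v=\s_v)=\pi_{\s_v,\s_u}$, which follows from $X_u\sim\pi$, $X_v\sim\pi$, the displayed identity with $S=\{u\}$, and reversibility (in the form of detailed balance) by an application of Bayes' rule.

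It remains to check condition (ii) for an edge $e=\{a,b\}$ of $T'$.  Deleting $e$ from $T$ produces components $T_a\ni a$ and $T_b\ni b$ partitioning the vertices of $T$, and since $v\notin\{a,b\}$ the leaf $v$ lies in exactly one of them; I would take $v\in T_b$, the case $v\in T_a$ being symmetric under exchanging $a$ and $b$.  For the component $T_a$ there is nothing to prove: $v\notin T_a$, so $T_a$ is the same set whether formed in $T$ or in $T'$, and the desired identity is just the $T'$-projection condition for $e$.  For the component $T_b$, observe that the edge $\{u,v\}$ survives in $T\setminus e$, so $u$ lies in the same component as $v$; hence $u\in B':=T_b\setminus v$ (and $B'$ is precisely the $b$-component of $T'\setminus e$), while $a\notin B'$.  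Writing $\Pr(X_a=\s_a\mid\bigwedge_{w\in T_b}X_w=\s_w)$ as a ratio of two probabilities and applying the displayed consequence of \eqref{e.uv} once to the denominator (with $S=B'$) and once to the numerator (with $S=B'\cup\{a\}$) --- both sets contain $u$ --- factors the event $\{X_v=\s_v\}$ out of each as the common factor $\pi_{\s_u,\s_v}$, which cancels; what remains is $\Pr(X_a=\s_a\mid\bigwedge_{w\in B'}X_w=\s_w)=\pi_{\s_b,\s_a}$ by the $T'$-projection hypothesis.

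I do not expect any step to be a genuine obstacle; the content is bookkeeping with the definitions.  The only spot that needs care is locating the leaf $v$ inside the component structure of $T\setminus e$ --- one must be sure $v$ always lies in the \emph{same} component as its neighbor $u$, so that the conditioning sets used in the cancellation really contain $X_u$ --- together with the degenerate possibility that $u$ is an endpoint of $e$.  The case $u=a$ cannot occur (it would place $v$ in $T_a$), and the case $u=b$ is harmless since then $u=b\in B'$ anyway; checking these, plus the trivial fact that deleting $\{u,v\}$ from $T$ yields the partition $T_u=T'$, $T_v=\{v\}$, is essentially everything.
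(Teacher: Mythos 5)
Your proof is correct and follows essentially the same route as the paper's own argument: the key step in both is that \eqref{e.uv} forces $X_v$ to be conditionally independent of $\{X_w\}_{w\in T'\setminus u}$ given $X_u$, so that conditioning on $X_v$ can be removed (your cancellation of the common factor $\pi_{\s_u,\s_v}$ is the same computation) and the remaining identity is just the $T'$-projection property. Your explicit treatment of both orientations of the new edge $\{u,v\}$ and of the degenerate endpoint cases is a welcome bit of extra care.
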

\old{
  \begin{proof}[Proof of Observation \ref{o.rec}.]

    Part \eqref{part.pi} of Definition \ref{d.statT} is immediate.  We need to verify that Part \eqref{part.edge} holds for edges other then the $\{u,v\}$ edge explicitly covered by \eqref{e.uv}.

    Consider an arbitrary edge $\{x,y\}$ in $T$.  We want to prove that
\begin{equation}\label{e.Tmark}
\Pr\brac{X_y=\s_y\middle| \bigwedge_{w\in T_x} \brac{X_w=\s_w}}=\pi_{\s_x,\s_y},
\end{equation}
where, as in Definition \ref{d.statT}, $T_x$ is the connected component of $x$ after the edge $\{x,y\}$ is removed from $T$.  We must consider the case where $T_x$ contains the vertex $v\in T\setminus T'$.  (The case where $v\in T_y$ follows from the fact that $\{X_w\}_{w\in T'}$ is a $\pi$-stationary $T$-projection.)  Since $\{X_w\}_{w\in T'}$ is a $\pi$-stationary $T'$-projection, we have that
\begin{equation}\label{e.nov}
\Pr\brac{X_y=\s_y\middle| \bigwedge_{w\in T_x\setminus v} \brac{X_w=\s_w}}=\pi_{\s_x,\s_y}.
\end{equation}
On the other hand, the hypotheses of Observation \ref{o.rec} imply even for any subtree $T''$ of $T'$ that
\[
\Pr\brac{X_v=\s_v\middle| \bigwedge_{w\in T''} \brac{X_w=\s_w}}=\Pr\brac{X_v=\s_v\middle| X_u=\s_u}=\pi_{\s_u,\s_v}.
\]
In particular, if we apply this first with $T''=T_x\setminus v$, and then with $T''=(T_x\cup \{y\})\setminus v$, we see that conditioning on the event $\bigwedge_{w\in T_x\setminus v} (X_w=\s_w)$, the events $X_v=\s_v$ and $X_y=\s_y$ are conditionally independent.  This fact together with \eqref{e.nov} gives \eqref{e.Tmark}, as desired.
\end{proof}
}

We can rephrase the proof of Theorem \ref{t.GCparallel} in this language.  Let $T$ be the tree consisting of $m$ paths of length $k$ sharing a common endpoint and no other vertices, and let $S$ be the leaves of $T$.  By symmetry, we have that $\r^{T,S}_{w,\ell}$ is constant over $w\in S$.  On the other hand, Observation \ref{o.rec} gives that under the hypotheses of Theorem \ref{t.GCparallel}, $\s_0$, $X_1,\dots,X_k$, and the $Z^s_i$'s are a $\pi$-stationary $T$-projection, with obvious assignments (e.g., $\s_0$ corresponds to a leaf of $T$; $X_k$ corresponds to the center).  In particular, \eqref{e.Tsum} implies that $\r^{T,S}_{w,\ell}\leq \frac{\ell+1}{n}$, which gives the theorem.

On the other hand, the definitions makes the following proof easy as well, using the same simple idea as Besag and Clifford's Theorem \ref{t.GCserial}.
\begin{proof}[Proof of Theorem \ref{t.starsplit}]
  Define $T$ to be the undirected tree with vertex set $\{v_0\}\cup \{v^s_j\mid 1\leq s\leq m, 1\leq j\leq k\}$, with edges $\{v_0,v^s_1\}$ for each $1\leq s\leq m$ and $\{v^s_j,v^s_{j+1}\}$ for each $1\leq s\leq m$, $1\leq j\leq k-1$.
 Now we let $S$ consist of all vertices of $T$ except the center $v_0$, and let $S_j$ denote the set of $m$ vertices in $S$ at distance $j$ from $v_0$.  By symmetry, we have that $\rho^{T,S}_{v,\ell}$ is constant in each $S_j$; in particular, we have that
  \[
  \rho^{T,S}_{v^1_j,\ell}=\frac{1}{n} \sum_{s=1}^m \rho^{T,S}_{v^s_j,\ell}
  \]
  and together with \eqref{e.Tsum} this gives that
  \begin{equation}\label{e.treeleq}
  \sum_{j=1}^k\rho^{T,S}_{v^1_j,\ell}\leq \frac{\ell+1}{n}.
  \end{equation}

  Now if we let
  \begin{align*}
    W_{v_0}&=X_k,\\
    W_{v^s_j}&=\begin{cases}
       X_{\xi-j} & s=1, 1\leq j<\xi\\
       \s_0 & s=1, j=\xi\\
       Y_{j-\xi} & s=1, j>\xi\\
       Z^s_j & 2\leq s\leq m, 1\leq j\leq k,
    \end{cases}
  \end{align*}
  then $\{W_w\}_{w\in T}$ is a $\pi$-stationary $T$-projection under the hypotheses of Theorem \ref{t.starsplit}, by recursively applying Observation \ref{o.rec}.  Moreover, as $\xi$ is chosen randomly among $\{1,\dots,k\}$, the probability that $\om(\s_0)=\om(W_{v^1_\xi})$ is $\ell$-small among $\{\om(W_w)\}_{w\in S}$ is given by
  \[
  \frac 1 k\brac{\rho^{T,S}_{v^1_1,\ell}+\dots+\rho^{T,S}_{v^1_k,\ell}}\leq \frac{\ell+1}{kn},
  \]
  where the inequality is from \eqref{e.treeleq}, giving the Theorem.
\end{proof}

\section{The product space setting}
\label{s.product}

The appeal of the theorems developed thus far in this paper is that they can be applied to any reversible Markov chain without any knowledge of its structure.  However, there are some important cases where additional information about the structure of the stationary distribution of a chain \emph{is} available, and can be exploited to enable more powerful statistical claims.

In this section, we consider the problem of evaluating claims of gerrymandering with a Markov Chain where the probability distribution on districtings is known to have a product structure imposed by geographical constraints.  For example, the North Carolina Supreme Court has ruled in \emph{Stephenson v.~Bartlett} that districtings of that state must respect groupings of counties determined by a prescribed algorithm.  In particular a set of explicit rules (nearly) determine a partition of the counties of North Carolina into county groupings whose populations are each close to an integer multiple of an ideal district size  (see \cite{countyclusters} for recent results on these rules), and then the districting of the state is comprised of independent districtings of each of the county groupings.

In this way, the probability space of uniformly random districtings is a product space, with a random districting of the whole state equivalent to collection of random independent districtings of each of the separate county groupings.  We wish to exploit this structure for greater statistical power.  In particular, running trajectories of length $k$ in each of $d$ clusters generates a total of $k^d$ comparison maps with only $k\cdot d$ total Markov chain steps.  To take advantage of the potential power of this enormous comparison set, we need theorems which allow us to compare a given map not just to a trajectory of maps in a Markov chain (since the $k^d$ maps do not form a trajectory) but to the product of trajectories.  This is what we show in this section.

Formally, in the product space setting, we have a collection $\cM^{[d]}$ of $d$ Markov Chains $\cM_1,\dots,\cM_d$, each $\cM_i$ on state space $\Sigma_i$ (each corresponding to one county grouping in North Carolina, for example).  We are given a label function $\omega:\Sigma^{[d]}\to \Re$, where here $\Sigma^{[d]}=\Sigma_1\times\dots\times\Sigma_d$.  In the first theorem in this section, which is a direct analog of the Besag and Clifford test, we consider a $\bm \sigma_0\in \Sigma^{[d]}$ distributed as $\bm \sigma_0\sim \pi^{[d]}$, where here $\pi^{[d]}$ indicates the product space of stationary distributions $\pi_i$ of the $\cM_i$.  (In the gerrymandering case, $\pi^{[d]}$ is a random map chosen by randomly selecting a map for each separate county cluster.)  In the tests discussed earlier in this paper, a state $\sigma_0\sim \cM$ is evaluated by comparing a state $\sigma_0$ to other states on a trajectory containing $\sigma_0$.  In the product setting, we compare $\bm \sigma_0$ against a product of one trajectory from each $\cM_i$.

In particular, given the collection $\cM^{[d]}$, a state $\bm\sigma_0=(\sigma_0^1,\dots,\sigma_0^d)\in \Sigma^{[d]}$, and $\mathbf j=(j_1,\dots j_d)$, $\mathbf k=(k_1,\dots,k_d)$, we define the \emph{trajectory product} $\mathbf X_{\bm \sigma_0,\mathbf j,\mathbf k}$ which is obtained by considering, for each $i$, a trajectory $X_0^i,\dots,X_{k^i}^i$ in $\cM_i$ conditioned on $X^i_{j_i}=\sigma_0^i$.   $\mathbf X_{\bm \sigma_0,\mathbf j,\mathbf k}$ is simply the set of all $d$-tuples consisting of one element from each such trajectory.

We define the \emph{stationary trajectory product} $\mathbf X_{\pi^{[d]},\mathbf k}$, analogously, except that the trajectories used are all stationary, instead of conditioning on $X^i_{j_i}=\sigma_0^i$.

\begin{theorem}\label{t.GCproduct}
  Given reversible Markov Chains $\cM_1,\cM_2,\dots,\cM_d$, fix any number $k$ and suppose that $\sigma^1_0,\dots,\sigma^d_0$ are chosen from stationary distributions $\pi_1,\dots,\pi_d$ of $\cM_1,\dots,\cM_d$, and that $\xi_1,\dots,\xi_d$ are chosen uniformly and independently in $\{0,\dots,k\}$.  For each $s=1,\dots,d$, consider two independent trajectories $Y^s_0,Y^s_1,\dots$ and $Z^s_0,Z^s_1,\dots$ in the reversible Markov Chain $\cM_s$ from $Y^s_0=Z^s_0=\sigma^s_0$. Let $\omega:\cM_1\times\dots\times\cM_d\to \Re$ be a label function on the product space, write $\bm\sigma_0=(\sigma_0^1,\dots,\sigma_0^d)$, and denote by $\mathbf Z_{\bm \sigma_0,k}$ the (random) set of all vectors $(a_1,\dots,a_d)$ such that for each $i$, $a_i\in \brac{\sigma_0^i,Y^i_1,\dots,Y^i_{\xi_i},Z^i_1,\dots,Z^i_{k-\xi_i}}$.  Then we have that 
  \begin{equation}\label{e.GCproduct}
\Pr\brac{\om(\bm \s_0)\text{\textnormal{ is an $\ep$-outlier among }}\om(\mathbf x), \mathbf x \in \mathbf Z_{\mathbf \sigma_0,k}}\leq\ep.
  \end{equation}
\end{theorem}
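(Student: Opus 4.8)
The plan is to reduce this to the one-line argument behind Besag and Clifford's serial test (the proof of Theorem \ref{t.GCserial}), carried out independently in each of the $d$ coordinates.  First I would repackage, for each chain $\cM_i$, the ``grow-and-split'' data as a genuine $\pi_i$-stationary trajectory: set
\[
W^i_0,W^i_1,\dots,W^i_k\ :=\ Y^i_{\xi_i},Y^i_{\xi_i-1},\dots,Y^i_1,\ \sigma^i_0,\ Z^i_1,\dots,Z^i_{k-\xi_i},
\]
so that $\sigma^i_0 = W^i_{\xi_i}$ and $\{\sigma^i_0,Y^i_1,\dots,Y^i_{\xi_i},Z^i_1,\dots,Z^i_{k-\xi_i}\}=\{W^i_0,\dots,W^i_k\}$ as lists of $k+1$ values.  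Consequently $\mathbf Z_{\bm\sigma_0,k}$ is exactly the grid $\{(W^1_{t_1},\dots,W^d_{t_d}) : (t_1,\dots,t_d)\in\{0,\dots,k\}^d\}$, indexed by $\{0,\dots,k\}^d$, with $\bm\sigma_0$ its entry at the random multi-index $\bm\xi=(\xi_1,\dots,\xi_d)$.

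The key step, and the part I expect to require the most care, is the distributional identification: the joint law of $\bigl((W^i_0,\dots,W^i_k)_{i=1}^d,\ \bm\xi\bigr)$ should coincide with that of $d$ independent $\pi_i$-stationary trajectories of length $k$ together with an independent $\bm\xi$ uniform on $\{0,\dots,k\}^d$.  For a single chain this is exactly what is shown inside the proof of Theorem \ref{t.GCserial}: conditioned on $\xi_i = j$, reversibility and the unequal-length analog of Observation \ref{o.stat} give that $W^i_0,\dots,W^i_k$ is $\pi_i$-stationary with a law that does not depend on $j$, while $\xi_i$ is uniform and independent of the two grown trajectories.  Independence across the $d$ chains (and of the $\xi_i$ among themselves) then upgrades this to the product statement; note that since the product of $d$ paths is not a tree, the $\pi$-stationary $T$-projection formalism of Section \ref{s.starsplit} does not apply directly, which is why the argument proceeds coordinatewise.

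Given the identification, I would finish by conditioning on all the trajectories $\{W^i_j\}$.  Then $\om(\bm\sigma_0)=\om(W^1_{\xi_1},\dots,W^d_{\xi_d})$ is a uniformly random entry of the \emph{fixed} list of $M:=(k+1)^d$ reals $\{\om(W^1_{t_1},\dots,W^d_{t_d})\}_{(t_1,\dots,t_d)}$, since $\bm\xi$ is uniform and independent of the list.  But for any list of $M$ reals, at most $\ep M$ of the $M$ indices are $\ep$-outliers — the elementary fact already used in the proof of Theorem \ref{t.GCserial} and underlying \eqref{e.sum}, since if $G$ denotes the set of $\ep$-outlier indices and $j^*\in G$ maximizes the listed value over $G$, then $|G|\le\#\{t : \text{value}_t\le\text{value}_{j^*}\}\le\ep M$ — so the conditional probability that $\om(\bm\sigma_0)$ is an $\ep$-outlier among $\{\om(\mathbf x):\mathbf x\in\mathbf Z_{\bm\sigma_0,k}\}$ is at most $\ep$; averaging over the trajectories gives \eqref{e.GCproduct}.  (Here ``$\ep$-outlier among $\mathbf Z_{\bm\sigma_0,k}$'' is understood with respect to the index set $\{0,\dots,k\}^d$, so repeated tuples are counted with multiplicity, matching the definition of $\ep$-outlier used throughout the paper.)
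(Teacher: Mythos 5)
Your proof is correct and follows essentially the same route as the paper's: identify $\mathbf Z_{\bm\sigma_0,k}$ as a stationary trajectory product with $\bm\sigma_0$ sitting at an independent, uniformly random multi-index $\bm\xi$, and then invoke the elementary fact that a uniformly random entry of any list of $M$ reals is an $\ep$-outlier with probability at most $\ep$. The paper compresses the distributional identification into its ``stationary trajectory product'' notation, whereas you spell out the coordinatewise reduction to Observation \ref{o.stat} and the conditioning step explicitly; the substance is identical.
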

\begin{proof}Like the proof of Theorem \ref{t.GCserial}, this proof is very simple; it is just a matter of digesting notation.  First observe that $\mathbf Z_{\bm \sigma_0,k}$ is simply a trajectory product $\mathbf X_{\bm \sigma_0,\mathbf \xi,\mathbf k}$, where where $\mathbf k=(k,\dots,k)$ and $\mathbf{\xi}$ is the random variable $(\xi_1,\dots,\xi_d)$.
  
  In particular, under the hypothesis that $\sigma^i_0\sim \pi_i$ for all $i$, $\mathbf Z_{\bm \sigma_0,k}$ is in fact a stationary trajectory product $\mathbf X_{\pi^{[d]},  \mathbf k}$,   In particular, by the random, independent choice of the $\xi_i$'s, the probability in \eqref{e.GCproduct} is equivalent to the probability that the label of a random element of the a stationary trajectory product is among $\ep$ smallest labels in the stationary trajectory product; this probability is at most $\ep$.
\end{proof}
The following is an analog of Theorem \ref{t.twopaths} for the product space setting.

\begin{theorem}\label{t.twopathproduct}
  Given reversible Markov Chains $\cM_1,\cM_2,\dots,\cM_d$, fix any
  number $k$ and suppose that $\sigma^1_0,\dots,\sigma^d_0$ are chosen
  from stationary distributions $\pi_1,\dots,\pi_d$ of
  $\cM^1,\dots,\cM^d$.  For each $s=1,\dots,d$, consider two
  independent trajectories $Y^s_0,Y^s_1,\dots$ and $Z^s_0,Z^s_1,\dots$
  in the reversible Markov Chain $\cM^s$ from
  $Y^s_0=Z^s_0=\sigma^s_0$. Let
  $\omega:\cM_1\times\dots\times\cM_d\to \Re$ be a label function on
  the product space, write
  $\bm\sigma_0=(\sigma_0^1,\dots,\sigma_0^d)$, and denote by
  $\mathbf Z_{\bm \sigma_0,k}$ the (random) set of all vectors
  $(a_1,\dots,a_d)$ such that for each $i$,
  $a_i\in \brac{\sigma_0^i,Y^i_1,\dots,Y^i_{k},Z^i_1,\dots,Z^i_{k}}$.
  Then we have that
  \begin{equation}\label{e.twopathproduct}
\Pr\brac{\om(\bm \s_0)\text{\textnormal{ is an $\ep$-outlier among }}\om(\mathbf x), \mathbf x \in \mathbf Z_{\bm \sigma_0,k}}\leq2^d\cdot \ep.
  \end{equation}
\end{theorem}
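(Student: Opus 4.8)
The plan is to run the argument behind Theorem \ref{t.twopaths} — the chain of inequalities \eqref{l.compare}–\eqref{l.implies} — in each of the $d$ coordinates simultaneously, paying the loss factor $(2k+1)/(k+1)<2$ once per coordinate. First I would apply Observation \ref{o.stat} separately in each factor $\cM_i$: since $Y^i_1,\dots,Y^i_k$ and $Z^i_1,\dots,Z^i_k$ are independent trajectories from $\sigma_0^i\sim\pi_i$, the sequence $W^i:=(Y^i_k,\dots,Y^i_1,\sigma_0^i,Z^i_1,\dots,Z^i_k)$ is a $\pi_i$-stationary trajectory of length $2k$ whose middle ($k$-th) state is $\sigma_0^i$. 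As the $d$ coordinates are independent, the family $\{(W^1_{t_1},\dots,W^d_{t_d})\}_{\mathbf t\in\{0,\dots,2k\}^d}$ is exactly the stationary trajectory product $\mathbf X_{\pi^{[d]},\mathbf k}$ with $\mathbf k=(2k,\dots,2k)$, and it coincides with $\mathbf Z_{\bm\sigma_0,k}$, with $\bm\sigma_0$ occupying the center position $(k,\dots,k)$. Writing $N=(2k+1)^d$ for the number of tuples, the event that $\om(\bm\sigma_0)$ is an $\ep$-outlier is, by definition, the event that the center tuple is $\ell$-small among the $N$ tuples, for $\ell$ the largest integer with $\ell+1\le\ep N$ (the event being empty, hence of probability $0\le 2^d\ep$, if $\ep N<1$).

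Next, for each $\mathbf j=(j_1,\dots,j_d)\in\{0,\dots,k\}^d$ I would restrict, in coordinate $i$, to the window of $k+1$ consecutive states $W^i_{k-j_i},\dots,W^i_{2k-j_i}$ (which always contains $W^i_k=\sigma_0^i$, at internal index $j_i$). A contiguous sub-trajectory of a $\pi_i$-stationary trajectory is again $\pi_i$-stationary, and independence across coordinates is preserved, so the resulting sub-family is a stationary trajectory product with each coordinate of length $k$, in which $\bm\sigma_0$ occupies position $\mathbf j$; this is the $d$-dimensional analog of the observation preceding \eqref{l.compare}. Being $\ell$-small among a finite list of reals is inherited by every sublist that still contains the tested entry, so if $\bm\sigma_0$ is $\ell$-small among all $N$ tuples of $\mathbf Z_{\bm\sigma_0,k}$, then $\bm\sigma_0$ is $\ell$-small within each of the $(k+1)^d$ windowed sub-products. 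Hence, letting $R_{\mathbf j}$ denote the probability that the position-$\mathbf j$ tuple of a stationary trajectory product of length $k$ in every coordinate is $\ell$-small among that product's $(k+1)^d$ tuples, we obtain $\Pr(\om(\bm\sigma_0)\text{ is an }\ep\text{-outlier})\le R_{\mathbf j}$ for every $\mathbf j$.

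To finish, I would sum over $\mathbf j\in\{0,\dots,k\}^d$. By linearity of expectation, $\sum_{\mathbf j}R_{\mathbf j}$ is the expected number of tuples of such a product whose label is $\ell$-small among the product's labels, and in any finite list of reals at most $\ell+1$ entries are $\ell$-small: taking the maximum value $v$ among the $\ell$-small entries, the entry attaining $v$ has at most $\ell+1$ entries (including itself) of value $\le v$, and every $\ell$-small entry is among those. This is the product-space incarnation of \eqref{e.sum}. Thus $\sum_{\mathbf j}R_{\mathbf j}\le\ell+1$, and averaging the bound $\Pr(\om(\bm\sigma_0)\text{ is an }\ep\text{-outlier})\le R_{\mathbf j}$ over the $(k+1)^d$ choices of $\mathbf j$ gives
\[
\Pr\bigl(\om(\bm\sigma_0)\text{ is an }\ep\text{-outlier}\bigr)\le\frac{\ell+1}{(k+1)^d}\le\ep\cdot\frac{(2k+1)^d}{(k+1)^d}<2^d\ep,
\]
as claimed.

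The computation is routine; there is no serious obstacle, and the only points needing care are the bookkeeping that each coordinatewise window of $W^i$ is again a stationary trajectory (so that the product of windows is a genuine stationary trajectory product, to which the ``at most $\ell+1$ are $\ell$-small'' counting bound applies) and the off-by-one in translating ``$\ep$-outlier'' into ``$\ell$-small.'' The exponent $d$ in $2^d$ enters precisely because the loss factor $(2k+1)/(k+1)$ from \eqref{l.implies} is incurred once in each of the $d$ factors; exactly as in Remark \ref{r.badcase}, this suggests it may be improvable toward $1$ for large $k$.
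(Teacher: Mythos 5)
Your proposal is correct and follows essentially the same route as the paper's proof: identify $\mathbf Z_{\bm\sigma_0,k}$ with a stationary trajectory product of length $2k$ in each coordinate via the coordinatewise analog of Observation \ref{o.stat}, bound the center probability by each windowed $\rho^k_{\mathbf j,\ell}$ (the $d$-dimensional analog of \eqref{l.compare}), and average against the sum bound $\sum_{\mathbf j}\rho^k_{\mathbf j,\ell}\leq\ell+1$ to get $\frac{\ell+1}{(k+1)^d}\leq 2^d\frac{\ell+1}{(2k+1)^d}$. You spell out a few details the paper leaves implicit (the windowing, the $\ep$-outlier/$\ell$-small translation), but the argument is the same.
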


\begin{proof}
  First consider $d$ independent stationary trajectories $X_0^i,X_1^i,X_2^i,\dots$ for each $i=1,\dots,d$, and define $\mathbf X_{\pi,k}$ to be the collection of all $(k+1)^d$ $d$-tuples $(a_1,\dots,a_d)$ where, for each $i$, $a_i\in \{X^i_0,\dots,X^i_k\}$.
  
  In analogy to Definition \ref{d.rhos}, we define $\rho^k_{\mathbf{j},\ell}$ for $\mathbf{j}=(j_1,j_2,\dots,j_k)$ to be the probability that for $X_{\mathbf j}=(X^1_{j_1},\dots,X^d_{j_d})\in \mathbf X_{\pi,k}$, we have that $\omega(X_{\mathbf j})$ is $\ell$-small among the $\omega$-labels of all elements of $\mathbf X_{\pi,k}$.

  Observe that for $\mathbf k=(k,\dots,k)$, we have in analogy to equation \eqref{l.compare} that
  \begin{equation}\label{l.dcompare}
    \rho^{2k}_{\mathbf k,\ell}\leq \rho^k_{\mathbf j,\ell}
  \end{equation}
  for any $j=(j_1,\dots,j_d)$.  And of course we have that
  \[
  \sum_{\mathbf j}\rho_{\mathbf j,\ell}^k\leq \ell+1.
  \]
  Thus averaging both sides of \eqref{l.dcompare} gives that
  \begin{equation}
  \label{l.ddone}
  \rho^{2k}_{\mathbf k,\ell}\leq \frac{\ell+1}{(k+1)^d}\leq 2^d\frac{\ell+1}{(2k+1)^d}.
  \end{equation}
  Now observe that the the statement that
  \[
  \omega(\bm \sigma_0)\text{ is an $\ep$-outlier among }\omega(\mathbf x),\mathbf x\in \mathbf Z_{\bm \sigma_0,k}
  \]
  equivalent to the statement that
    \[
  \omega(\bm \sigma_0)\text{ is an $\ell$-small among }\omega(\mathbf x),\mathbf x\in \mathbf Z_{\bm \sigma_0,k}
  \]
  for $\ell=\ep\cdot (2k+1)^d-1$; thus \eqref{l.ddone} gives the theorem, since $\rho^{2k}_{\mathbf k,\ell}$ is precisely the probability that this second statement holds.
\end{proof}
The presence of the $2^d$ in \eqref{e.twopathproduct} is now potentially more annoying than the constant 2 in \eqref{t.twopaths}, and it is natural to ask whether it can be avoided.  However, using the example from Remark \ref{r.badcase}, it is easy to see that an exponential factor $\bfrac 3 2 ^d$ may really be necessary, at least if $k=1$.  Whether such a factor can be avoided for larger values of $k$ is an interesting question.  However, as we discuss below, this seemingly large exponential penalty is actually likely dwarfed by the quantitative benefits of the product setting, in many real-world cases.

\subsection{Illustrative product examples}
\label{sec:illustrative-example}

The fact the estimate in Theorem~\ref{t.GCproduct} looks like original
Theorem~\ref{t.GCserial}, hides the power in the product version. More
misleading is the fact that Theorem~\ref{t.twopathproduct} has a
$2^d$ which seems to make the theorem degrade with increasing $d$.

Let us begin by considering the simplest example we are looking for
the single extreme outlier across the entire product space. Let us
further assume that this global extreme is obtained by choosing each
of the extreme element in each part of the product space. An example
of this comes for the Gerrymandering application where one is
naturally interested in the seat count. Each of the product
coordinates represents the seats from a particular geographic
region. In some states such as North Carolina judicial rulings break
the problem up into the product measure required by
Theorem~\ref{t.GCproduct}  and  Theorem~\ref{t.twopathproduct} by
stipulating that particular geographic regions must be redistricted
independently.

For illustrative purposes, lets assume that there are $L$ different outcomes
in each of the $d$ different factors of the product space.  Hence
the chance of getting the minimum in any of the $d$ different
components is $1/L$. However, getting the minimum in the whole
product space requires getting the minimum in each of the components and so is
$1/L^d$. Hence is this setting one can
take $\epsilon=  1/L^d$ in Theorem~\ref{t.GCproduct}  and
Theorem~\ref{t.twopathproduct}. Thus even in Theorem~\ref{t.twopathproduct} as long as $L > 2$, one has a
significant improvement as $d$ grows.

Now lets consider a second slightly more complicated example which
builds on the proceeding one. Let us equip each $\mathcal{M}_i$ with
a function $\omega_i$ and decide that we are interested in the event
\begin{equation}
  \label{eq:1}
  \mathcal{E}(\delta)=\Big\{ \{\xi_i\}_{1}^d \colon  \sum_{i=1}^d \omega_i(\xi_i) \leq \delta \Big\}\,.
\end{equation}
Then one can take
\begin{align*}
  \epsilon = \frac{|\mathcal{E}(\delta)|}{L^d}
\end{align*}
in Theorem~\ref{t.twopathproduct} and $2^d$ times this in
Theorem~\ref{t.twopathproduct}, where $|\mathcal{E}(\delta)|$ is
simply the number of elements in the set $\mathcal{E}(\delta)$. This can lead to a significant
improvement in the power of the test in the product case over the
general case when $|\mathcal{E}(\delta)|$ grows slower than $L^d$.

There remains the task of calculating $|\mathcal{E}(\delta)|$. In the gerrymandering examples
we have in mind, this can be done efficiently. When counting seat
counts, the map $\omega_i$ is a many--to--one map with a range consisting
of a few discrete values. This means that one can tabulate exactly the
number of samples which produce a given value of $\omega_i$. Since we
are typically interested extreme values of
\begin{align*}
  \omega(\xi) = \sum_{i=1}^d \omega_i(\xi_i)\,,
\end{align*}
there are often only a few partitions of  each value of $\omega$ made
from possible values of $\omega_i$. When this true, the size of
$\mathcal{E}$ can be calculated exactly efficiently.

For example, let us assume there are $d$ geographical regions which
each needs to be divided into 4 districts. Furthermore each party
always wins at least one seat in each geographical region; hence, the
only possible outcomes are 1, 2 or 3 seats in each region for a given
party. If  $\omega_i$ counts the number of seats for the party of
interest in geographic region $i$, let us suppose for concreteness
that we want are interested in $\delta = 2d$. To calculate
$|\mathcal{E}(\delta)|$, we need to only keep track of the number of
times 1, 2 or 3 seats is produced in each geographic region. We can
then combine these numbers by summing over all of the ways the numbers
1, 2 and 3 can add numbers between $d$ and  $2d$. (The smallest
$\omega(\xi)$ can be given our assumptions is $d$.) This is a straightforward calculation
for which there exist fast algorithms which leverage the hierarchical
structure. Namely, group each region with another and calculate
the combined possible seat counts and their frequencies.  Continuing up the tree recursively one can calculate
$|\mathcal{E}(\delta)|$ in only logarithmically many
levels. 

It is worth remarking, that not all statistics of interest fall as
neatly into this framework which enables simple and efficient computation. For instance, calculating
the ranked marginals used in \cite{MH} requires choosing some
representation of the histogram, such as a fixed binning, and would
yield only approximate results.

\subsection{Towards an $(\ep,\alpha)$-outlier theorem for product spaces}
\label{sec:more-disc-prod}

In general, the cost of making a straightforward translation of Theorems \ref{t.outlier} or \ref{t.GCoutlier} to the product-space setting are surprisingly large: in both cases, the square root is replaced by a $2^d$th root, according to the natural generalization of the proofs of those theorems.

Accordingly, in this section we point out simply that by using a more complicated definition of $(\ep,\alpha)$-outliers for the product space setting, an analog of Theorem \ref{t.GCoutlier} is then easy.  In particular, let us define 
\begin{equation}\label{pbD}
p^{\mathbf k}_{\mathbf{U},\ep}(\bm \sigma_0):=\Pr\big(\omega(\bm \sigma_0)\text{ an $\ep$-outlier in }\mathbf X_{\bm \sigma_0,\mathbf j,\mathbf k}\big),
\end{equation}
where $\mathbf{j}=(j_1,\dots,j_d)$ is chosen randomly with respect to the uniform distributions $j_i\sim \mathrm{Unif}[0,k_i]$ (here $\mathbf k=(k_1,\dots,k_d)$).

Now we define a state $\bm \sigma_0$ to be an $(\ep,\alpha)$-outlier with respect to a distribution $\mathbf k$ if among all states in $\Sigma^{[d]}$, we have that $p^{\mathbf k}_{\mathbf{U},\ep}(\bm \sigma_0)$ is in the the largest $\alpha$ fraction of the values of $p^{\mathbf k}_{\mathbf U,\ep}(\bm \sigma)$ over \emph{all} states $\bm \sigma\in \cM^{[d]}$, weighted according to $\pi$.

\begin{theorem}\label{t.productClaim}
  We are given Markov Chains $\cM_1,\dots,\cM_d$.   Suppose that $\mathbf \sigma_0$ is not an $(\ep,\alpha)$-outlier with respect to $\mathbf k$.  Then
  \[
  p^{\mathbf k}_{\mathbf U,\ep}(\sigma_0)\leq \frac\ep \alpha.
  \]
\end{theorem}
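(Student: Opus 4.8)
The plan is to mimic the proof of Theorem \ref{t.GCoutlier} closely, replacing the geometric-length trick there with the product-of-uniform-indices structure already built into the definition of $p^{\mathbf k}_{\mathbf U,\ep}$, and using Theorem \ref{t.GCproduct} in place of Theorem \ref{t.GCserial}. First I would fix an arbitrary $\bm \sigma=(\sigma^1,\dots,\sigma^d)$ and, for each coordinate $i$, split a $\pi_i$-stationary trajectory in $\cM_i$ at a random point into two independent trajectories emanating from $\sigma^i$, exactly as in Observation \ref{o.stat}. Doing this in all $d$ coordinates realizes the trajectory product $\mathbf X_{\bm \sigma,\mathbf j,\mathbf k}$ with $\mathbf j$ uniform as the product of the ``left halves'' and lets us regard it as sitting inside a larger trajectory product obtained by concatenating the left and right halves in each coordinate. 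Writing $A$ for the event that $\om(\bm\sigma)$ is an $\ep$-outlier in the ``forward'' product $\mathbf X_{\bm\sigma,\mathbf j,\mathbf k}$ and $B$ for the analogous event in the ``backward'' product (where each coordinate trajectory is reversed), independence of the two halves in every coordinate gives $p^{\mathbf k}_{\mathbf U,\ep}(\bm\sigma)^2=\Pr(A\cap B\mid X^{\bullet}_{\mathbf j}=\bm\sigma)$, and the key containment is that $A\cap B$ implies that $\om(\bm\sigma)$ is an $\ep$-outlier among the $\om$-labels of the full concatenated product; thus
\[
p^{\mathbf k}_{\mathbf U,\ep}(\bm\sigma)^2\le \Pr\big(\om(\bm\sigma)\text{ is an }\ep\text{-outlier in the full concatenated product}\mid \text{center}=\bm\sigma\big),
\]
where in this last expression all the split points and lengths are random. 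This is the exact product analog of \eqref{l.GCcomparison}.

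Next I would take expectations over $\bm\sigma\sim\pi^{[d]}$. The not-$(\ep,\alpha)$-outlier hypothesis says $\Pr_{\bm\sigma\sim\pi}(p^{\mathbf k}_{\mathbf U,\ep}(\bm\sigma)\ge p^{\mathbf k}_{\mathbf U,\ep}(\bm\sigma_0))\ge\alpha$, so $\E_{\bm\sigma\sim\pi}(p^{\mathbf k}_{\mathbf U,\ep}(\bm\sigma)^2)\ge\alpha\cdot p^{\mathbf k}_{\mathbf U,\ep}(\bm\sigma_0)^2$. Combining this with the displayed bound, the right-hand side un-conditions to
\[
\alpha\cdot p^{\mathbf k}_{\mathbf U,\ep}(\bm\sigma_0)^2\le \Pr\big(\om(\bm\sigma_0\text{-center})\text{ is an }\ep\text{-outlier in the full concatenated product}\big),
\]
and now the goal is to show this last probability is at most $\ep$. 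This is precisely the content of Theorem \ref{t.GCproduct}: conditioning on the lengths of each of the $2d$ half-trajectories (equivalently, on the full per-coordinate lengths and the split points), the center in each coordinate is a uniformly random index of that coordinate's trajectory, so the concatenated object is a stationary trajectory product and a uniformly random element of it is an $\ep$-outlier with probability at most $\ep$; averaging over the lengths preserves the bound. That yields $p^{\mathbf k}_{\mathbf U,\ep}(\bm\sigma_0)^2\le \ep/\alpha$, but since the statement asks for $\ep/\alpha$ (not its square root), the theorem follows a fortiori once we note $p^{\mathbf k}_{\mathbf U,\ep}(\bm\sigma_0)\le 1$ forces $p^{\mathbf k}_{\mathbf U,\ep}(\bm\sigma_0)\le \sqrt{\ep/\alpha}$; if $\ep\le\alpha$ this already gives $\le\ep/\alpha$ only when $\ep/\alpha\le 1$, which always holds here, and in fact $\sqrt{\ep/\alpha}\le \ep/\alpha$ is false in general — so more carefully, the bound actually obtained is the stronger $\sqrt{\ep/\alpha}$, and I would simply record the weaker $\ep/\alpha$ as stated (or, more likely, the intended statement/proof keeps $\ep/\alpha$ because the splitting is done coordinatewise into $2d$ pieces and one does not re-square, in which case the single application of Theorem \ref{t.GCproduct}-type reasoning directly gives $p\le\ep/\alpha$).

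The main obstacle I anticipate is bookkeeping rather than a genuine mathematical difficulty: one must be careful about what ``$\ep$-outlier in a product'' means when the ambient set size changes (forward product has $\prod(k_i+1)$ elements, concatenated product has $\prod(2k_i+1)$), and verify that the containment $A\cap B\subseteq\{\text{outlier in concatenation}\}$ survives this change of denominator — this is where the product-space analog of Remark \ref{r.badcase}'s constant could in principle enter, and one must check it does not (because here the split points are averaged, exactly as in the Besag--Clifford argument, rather than fixed). The second delicate point is confirming that, after conditioning on all $2d$ half-lengths, the induced distribution on the center index in each coordinate is genuinely uniform and independent across coordinates, which follows from the independence of the coordinates and the memorylessness/uniform-conditional property used in the proof of Theorem \ref{t.GCoutlier}, applied one coordinate at a time. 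Once these two points are in hand, the argument is a direct transcription of the proof of Theorem \ref{t.GCoutlier} with ``trajectory'' replaced by ``trajectory product'' throughout and Theorem \ref{t.GCproduct} invoked at the final step.
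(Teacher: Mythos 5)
Your main line of argument has a real gap: the squaring trick borrowed from Theorems \ref{t.outlier} and \ref{t.GCoutlier} produces $\alpha\cdot p^{\mathbf k}_{\mathbf U,\ep}(\bm\sigma_0)^2\le\ep$, i.e.\ $p^{\mathbf k}_{\mathbf U,\ep}(\bm\sigma_0)\le\sqrt{\ep/\alpha}$, and since $\ep/\alpha\le 1$ this is \emph{weaker} than the claimed bound $\ep/\alpha$ (for $x\in[0,1]$ one has $\sqrt{x}\ge x$). You half-notice this at the end, but you leave the resolution as a parenthetical guess rather than an argument, so as written the proposal does not establish the stated inequality. There is also a mismatch in the setup: the quantity $p^{\mathbf k}_{\mathbf U,\ep}$ is defined in \eqref{pbD} with $\bm\sigma_0$ placed at a uniformly random \emph{interior} position $\mathbf j$ of each coordinate trajectory, so it is already the two-sided Besag--Clifford object; there is no separate ``forward product'' whose conditional probability you can multiply against an independent ``backward product'' to get a square.

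The paper's proof is exactly your parenthetical alternative, carried out: no squaring at all. The hypothesis that $\bm\sigma_0$ is not an $(\ep,\alpha)$-outlier gives
\[
\E_{\bm\sigma\sim\pi}\, p^{\mathbf k}_{\mathbf U,\ep}(\bm\sigma)\;\ge\;\alpha\cdot p^{\mathbf k}_{\mathbf U,\ep}(\bm\sigma_0),
\]
and the expectation on the left is, by the construction of the random index $\mathbf j$, precisely the probability that a uniformly random element of the stationary trajectory product $\mathbf X_{\pi^{[d]},\mathbf k}$ is an $\ep$-outlier in that product, which is at most $\ep$ (this is the content of Theorem \ref{t.GCproduct}, or just the exchangeability argument behind it). Dividing by $\alpha$ finishes the proof in one line. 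The reason a first-moment argument suffices here --- and why no square root appears --- is exactly the point the paper makes right after the theorem: the product-space definition of $(\ep,\alpha)$-outlier places $\bm\sigma_0$ at a random interior position rather than at the endpoint of the trajectories; it is the endpoint formulation that forces the Cauchy--Schwarz step in Theorems \ref{t.outlier} and \ref{t.GCoutlier}, and it is deliberately given up here in exchange for the clean $\ep/\alpha$ bound.
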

\begin{proof}
  This follows immediately from the definitions.  From the definition of $(\ep,\alpha)$-outlier given above for the product setting, we have that if $\sigma_0$ is not an $(\ep,\alpha)$-outlier, then for a random $\sigma\sim \pi$,
  \[
  \Pr\bigg(p^{\mathbf k}_{\mathbf U,\ep}(\sigma)\geq p^{\mathbf k}_{\mathbf U,\ep}(\sigma_0)\bigg)\geq \alpha.
  \]
  Thus we can write
  \[
  \E_{\sigma\sim \pi}p^{\mathbf k}_{\mathbf U,\ep}(\sigma)\geq \alpha \cdot p^{\mathbf k}_{\mathbf U,\ep}(\sigma_0).
  \]
  And of course this expectation is just the probability that a random element of $\mathbf X_{\pi,\mathbf k}$ is an $\ep$-outlier on $X_{\pi,\mathbf k}$, which is at most $\ep$.
\end{proof}
Of course this kind of trivial proof would be possible in the general non-product space setting also, but the sacrifice is that $(\ep,\alpha)$-outliers cannot be defined with respect to the endpoints of trajectories, which appears most natural.  Whether theorems analogous to \ref{t.outlier} and \ref{t.GCoutlier} are possible in the product space setting without an explosive dependence on the dimension $d$ seems like a very interesting question.

\end{document}